\documentclass[11pt,twoside]{amsart}
\usepackage{a4}
\usepackage {amssymb, latexsym, amsthm, amsmath, amsfonts, amscd}
\usepackage{graphicx}

\newtheorem{theorem}{Theorem}[section]

\newtheorem{lemma} [theorem]{Lemma}

\newtheorem{proposition}[theorem]{Proposition}

\newcommand{\Z}{\mbox{$\mathbb Z$}}

\newcommand{\F}{\mbox{$\mathbb F$}}  
\newcommand{\N}{\mbox{$\mathbb N$}}     

\newcommand{\C}{\mbox{$\mathbb C$}}     

\newcommand{\rad}{\operatorname{rad}}
\newcommand{\Quad}{\operatorname{Quad}}
\newcommand{\sign}{\operatorname{sign}}
\newcommand{\diag}{\operatorname{diag}}
\newcommand{\Inf}{\operatorname{Inf}}
\newcommand{\Hom}{\operatorname{Hom}}
\newcommand{\GL}{\operatorname{GL}}
\newcommand{\tr}{\operatorname{tr}}

\def\rem{\refstepcounter{theorem}\paragraph{{\bf Remark \thetheorem}}}

\voffset=-12mm
\mathsurround=2pt
\parindent=12pt
\parskip= 4.5 pt
\lineskip=3pt
\oddsidemargin=10mm
\evensidemargin=10mm
\topmargin=55pt
\headheight=12pt
\footskip=30pt
\textheight 8.1in
\textwidth=150mm
\raggedbottom
\pagestyle{myheadings}
\hbadness = 10000
\tolerance = 10000

\vspace{5cm}

\begin{document}
  
\title{Characters of real special $2$-groups}
\author{Dilpreet Kaur}
\address{Indian Institute of Science Education and Research, Knowledge City, Sector-81, Mohali 140306 INDIA.}
\thanks{The first named author gratefully acknowledges the Senior Research 
Fellowship from Council of Scientific \& Industrial Research, India
for the financial support during the period of this work.}
\email{dilpreetkaur@iisermohali.ac.in}
\author{Amit Kulshrestha}
\address{Indian Institute of Science Education and Research, Knowledge City, Sector-81, Mohali 140306 INDIA.}
\email{amitk@iisermohali.ac.in}
\date{}
\subjclass[2010]{20C15, 15A63}
\keywords{quadratic maps, real groups, special $2$-groups}

\begin{abstract}  
It is well-known that special $2$-groups can be described in terms of quadratic
maps over fields of characteristic $2$.
In this article we develop methods to compute conjugacy classes, complex representations and characters
of a real special $2$-group {\it using quadratic maps alone}.
\end{abstract}
 
\maketitle

\section{Introduction}\label{Section_Introduction}

Special $2$-groups are the $2$-groups for which the commutator subgroup, the Frattini subgroup and the centre,
all three coincide and are isomorphic to an elementary abelian group. A particular case is that of {\it extraspecial $2$-groups}
where, in addition, the centre is required to be of order $2$. Non-abelian groups of order 8 and their
central products are examples of extraspecial $2$-groups \cite[\S 3.10.2]{Wilson}. 
Special $2$-groups can be described in terms of quadratic maps between vector spaces over the field of order 
$2$ \cite[\S1.3]{ObedPaper}. \\

A group $G$ is called a {\it real group} if for each $x\in G$, the conjugacy classes of $x$ and $x^{-1}$ are same. 
A {\it strongly real group} is the one in which every element can be expressed
as a product of at most two elements of order $2$. Every strongly real group is real. \\

Recently special $2$-groups have been studied to establish that there are infinitely many strongly real groups
which admit complex symplectic representations, and vice-versa, there are infinitely many
groups which are not strongly real and do not
admit symplectic representations \cite{DA}. This generates an interest in the
computation of conjugacy classes, representations and character table of special $2$-groups.
In this article we explore the description of special $2$-groups as quadratic maps 
to make these computations for real special $2$-groups. 
Our methods to compute representations, characters and conjugacy classes
can be implemented directly on the quadratic maps associated to special $2$-groups.
These methods are based on the understanding of representations of extraspecial $2$-groups. 
The key point of our proofs lies in the demonstration that the representations of extraspecial $2$-groups 
can indeed be patched together to construct all representations of real special $2$-groups. This is done by converting quadratic maps
to quadratic forms by composing them with suitable linear maps. A crucial step is a refinement in a result of Zahinda
\cite[Prop. 3.3]{ObedPaper}. \\

The article is organized as follows: In \S \ref{Section_Quadratic-maps-and-special-$2$-groups} we
define quadratic maps, special $2$-groups and recall the connection between 
them. We also
describe how one can construct quadratic forms from these quadratic maps and utilize the understanding
of extraspecial $2$-groups to construct all complex representations of real special $2$-groups, up to equivalence. 
Then in \S \ref{Section_Representations-of-real-special-$2$-groups} and \S \ref{Section_Character-table-of-real-special-$2$-groups}
we describe representations and characters of real special $2$-groups in terms of representations and characters of extraspecial $2$-groups.
A computation of conjugacy classes of special $2$-groups is done in 
\S \ref{Subsection_Conjugacy-classes-of-real-special-$2$-groups}. We conclude the article with \S \ref{example} by an explicit
computation of character table of a particular real special $2$-group following the methods developed in preceding sections. \\

The main results of this article are Theorems \ref{complete_list_representations}, \ref{character-table}, 
\ref{conjugacy_classes_not-in-rad}  and  \ref{conjugacy_classes_rad}. Theorem \ref{complete_list_representations} describes all non-linear
irreducible representations of real special $2$-groups, Theorem \ref{character-table} describes all irreducible
characters and Theorems \ref{conjugacy_classes_not-in-rad} and \ref{conjugacy_classes_rad} describe conjugacy classes of these groups.
Detailed statements of these theorems carry quite a bit of notation, and we avoid
that in the introductory section.

\section{Quadratic maps and special $2$-groups} \label{Section_Quadratic-maps-and-special-$2$-groups}
Throughout this article $\F$ denotes a field of characteristic $2$ and $\F_2$ denotes the field with two elements. Let $V$ and $W$ be vector spaces over $\F$.
A map $q : V \to W$ is called a {\it quadratic map} if $q(\alpha v) = \alpha^2 q(v)$ for all $v \in V$, for all $\alpha \in \F$
and the map $b_q : V \times V \to W$ defined by $b_q (v, w) = q(v+w)- q(v) - q(w)$ is bilinear.
The bilinear map $b_q$ is called the {\it polar map} associated to the quadratic map $q$.
We denote by $\langle b_q(V \times V) \rangle$ the $\F$-subspace of $W$ generated by the image of $b_q$.
The set of quadratic maps between $V$ and $W$ is denoted by $\Quad(V, W)$.
The subspace $\rad(b_q) := \{v \in V : b_q (v, w) = 0 ~\forall w \in V\}$ of $V$ is called the {\it radical} of $(V, q)$.
A quadratic map $q$ is said to be {\it regular} if $\rad(b_q)=0$. A quadratic map is called a {\it quadratic form} if $W = \F$. For the properties of quadratic forms over a field of characteristic $2$, we refer to \cite{HL} and \cite{Pfister}.\\

For a group $G$, let $Z(G)$ denote its centre and $[G, G]$ denote its derived subgroup.
Let $\Phi(G)$ be the Frattini subgroup of $G$.
A $2$-group $G$ is called a {\it special $2$-group} if $\Phi(G) = [G, G] = Z(G)\cong \left(\frac{\Z}{2\Z}\right)^n$ for some $n\in \N$. Note that these conditions imply that the quotient
$\frac{G}{Z(G)}$ and the centre $Z(G)$ both are elementary abelian $2$-groups \cite[Ch. 5, Th. 1.3]{GorensteinBook}.
In what follows, we describe special $2$-groups through quadratic maps. 

\subsection{Quadratic maps associated to special $2$-groups} \label{Subsection_Quadratic-maps-associated-to-special-$2$-groups}
We briefly recollect the connection between special $2$-groups and quadratic maps.
For details we refer to \cite[\S 1.3]{ObedPaper}.
Let $G$ be a special $2$-group. Let $W := Z(G)$ and $V := \frac{G}{Z(G)}$. Since $W$ and $V$
are elementary abelian $2$-groups, we may regard them as vector spaces over $\F_2$. 
Consider the map $q: \frac{G}{Z(G)}\rightarrow Z(G)$ defined by $q(xZ(G))=x^2$ for $xZ(G)\in \frac{G}{Z(G)}$.
The map $q$ is a well defined quadratic map and its {\it polar map} $b_q : V \times V \to W$ is given by 
$b_q(xZ(G),yZ(G))=xyx^{-1}y^{-1}$, where $x,y\in G$. As a quadratic map $q$ is {\it regular} and 
$\langle b_q(V \times V) \rangle = W$ \cite[Lemma 2.3]{DA}.
The map $q$ is called the {\it quadratic map associated to the special $2$-group $G$}. Conversely,
the following theorem asserts that a regular quadratic map between two finite dimensional $\F_2$-vector spaces uniquely defines
a special $2$-group provided $\langle b_q(V\times V)\rangle=W$.

\begin{theorem} [\cite{ObedPaper}, Th. 1.4] \label{special-2-group-of-a-quad-map}
Let $V$ and $W$ be two finite dimensional vector spaces over $\F_2$ and $q : V\rightarrow W$ be a regular quadratic map.
Let $b_q$ be the polar map of $q$. If $\langle b_q(V\times V)\rangle=W$ then there exists a special $2$-group $G$
such that the quadratic map associated to $G$ is $q$. Such a group is unique up to isomorphism.
\end{theorem}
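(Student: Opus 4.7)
The plan is to construct a candidate group $G$ as a central extension of $V$ by $W$ whose squaring map and commutator realise $q$ and $b_q$ respectively, and then to show by a common generators-and-relations description that any two such groups are isomorphic.

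For existence, first I would produce a bilinear map $f : V \times V \to W$ satisfying $f(v,v)=q(v)$. After fixing an ordered basis $e_1,\dots,e_n$ of $V$, set $f(e_i,e_i) := q(e_i)$, $f(e_i,e_j) := b_q(e_i,e_j)$ for $i<j$, and $f(e_i,e_j):=0$ for $i>j$; the standard expansion of a quadratic map over $\F_2$ in terms of basis data then forces $f(v,v)=q(v)$ for every $v\in V$. I then take $G:=V\times W$ with operation
\[
(v_1,w_1)(v_2,w_2) := \bigl(v_1+v_2,\ w_1+w_2+f(v_1,v_2)\bigr).
\]
Bilinearity of $f$ makes the $2$-cocycle identity (hence associativity) automatic; $(0,0)$ is the identity, $(v,w+q(v))$ is the inverse of $(v,w)$, and $|G|=2^{\dim V+\dim W}$. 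A direct computation yields $(v,w)^2=(0,q(v))$ and $[(v_1,w_1),(v_2,w_2)]=(0,b_q(v_1,v_2))$. The hypothesis $\langle b_q(V\times V)\rangle=W$ then gives $[G,G]=\{0\}\times W$, while regularity of $q$ (i.e., $\rad(b_q)=0$) forces $Z(G)\subseteq \{0\}\times W$, so $Z(G)=[G,G]$. Since every square already lies in $\{0\}\times W$, this group also equals the Frattini subgroup $\Phi(G)$. Because $G/Z(G)\cong V$ is elementary abelian, $G$ is a special $2$-group, and its associated quadratic map is $q$ by construction.

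For uniqueness, suppose $G'$ is a second special $2$-group with associated quadratic map $q$. Fixing a basis $e_1,\dots,e_n$ of $V=G/Z(G)\cong G'/Z(G')$, I choose lifts $\tilde e_i\in G$, $\tilde e_i'\in G'$, and identify $Z(G)\cong W\cong Z(G')$ canonically. Every element of $G$ has a unique normal form $\tilde e_1^{a_1}\cdots \tilde e_n^{a_n}\,z$ with $a_i\in\{0,1\}$ and $z\in Z(G)$, and similarly for $G'$. The assignment
\[
\phi : \tilde e_1^{a_1}\cdots \tilde e_n^{a_n}\,z \ \longmapsto\ (\tilde e_1')^{a_1}\cdots (\tilde e_n')^{a_n}\,z
\]
is a bijection $G\to G'$; it will be a homomorphism once one verifies that rewriting a product of two normal forms back into normal form depends only on the values $\tilde e_i^2=q(e_i)$ and $[\tilde e_j,\tilde e_i]=b_q(e_j,e_i)$, which are identical in both groups.

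The step I expect to be the main obstacle is making this last uniqueness argument airtight: one needs both the existence and uniqueness of the normal form and a careful bookkeeping of how squares and commutators produce each intermediate central correction during reordering. Equivalently, this is the statement that the class in $H^2(V,W)$ of a central extension producing associated quadratic map $q$ is pinned down by $q$ alone --- a fact intuitively clear from the explicit bilinear cocycle used for existence, but deserving either an induction on word length or an explicit comparison of two cocycles whose difference has trivial diagonal and trivial antisymmetrisation.
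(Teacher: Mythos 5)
Your proposal is correct in substance, but it takes a different route from the paper, which in fact does not prove this statement at all: it is quoted from \cite{ObedPaper}, and the machinery the paper leans on is the cohomological dictionary of Prop.~\ref{H_Q} (the bijection $\Quad(V,W)\leftrightarrow H^2(V,W)$) together with the cocycle construction of $G_q$ in Remark~\ref{group_operation}. Your existence argument is a concrete instance of that construction: the bilinear map $f$ with $f(e_i,e_i)=q(e_i)$, $f(e_i,e_j)=b_q(e_i,e_j)$ for $i<j$ and $0$ for $i>j$ is a normalized $2$-cocycle whose diagonal is $q$, and your verifications of $(v,w)^2=(0,q(v))$, $[G,G]=\{0\}\times\langle b_q(V\times V)\rangle=\{0\}\times W$, $Z(G)=\{0\}\times W$ (using $\rad(b_q)=0$) and $\Phi(G)=G^2[G,G]=\{0\}\times W$ are complete and correct. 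Where you genuinely diverge is uniqueness: the cited route obtains it from the injectivity of the correspondence in Prop.~\ref{H_Q} (two extensions realizing the same quadratic map have cohomologous cocycles, hence are isomorphic), whereas you re-derive this by hand via normal forms. Your sketch does work and can be closed exactly along either of the lines you name: (a) the section $\sigma(\sum a_ie_i)=\tilde e_1^{a_1}\cdots\tilde e_n^{a_n}$ gives a cocycle $c_G(u,v)=\sigma(u)\sigma(v)\sigma(u+v)^{-1}$ which, because all correction terms in the reordering are central and equal to $[\tilde e_j,\tilde e_i]=b_q(e_j,e_i)$ or $\tilde e_i^2=q(e_i)$, is given by the same explicit bilinear formula $\sum_i a_ib_i\,q(e_i)+\sum_{i<j}a_jb_i\,b_q(e_j,e_i)$ in both groups, so both are isomorphic to your model; or (b) the difference of two normalized cocycles with the same diagonal has zero diagonal and zero antisymmetrization (since the polar forms agree), hence is a symmetric cocycle defining an abelian extension in which every element squares to the identity; such an extension is elementary abelian, therefore split, so the difference is a coboundary. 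What each approach buys: yours is elementary and self-contained, avoiding any appeal to \cite{ObedPaper}; the paper's (Zahinda's) is shorter once the $\Quad$--$H^2$ bijection is in place and is the formulation the rest of the paper actually uses. The only thing you should make explicit if you flesh out (a) is the uniqueness of the normal form (a counting argument from $|G|=|V|\,|W|$ plus the fact that the $\tilde e_i$ map to a basis of $G/Z(G)$) and that ``same associated quadratic map'' is understood up to the identifications $G/Z(G)\cong V$, $Z(G)\cong W$.
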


Let $V$ and $W$ be finite dimensional vector spaces over $\F_2$. 
In what follows, we regard $V$ as an additive group acting trivially on $W$. In this set up we consider
the group $Z^n(V, W)$ of $n$-cocycles of $V$ with coefficients in $W$ and its subgroup 
$B^n(V, W)$ of $n$-coboundaries. The {\it $n^{\rm th}$ cohomology group} $H^n(V, W)$ for the trivial action of $V$ on $W$ is the
quotient of $Z^n(V, W)$ by $B^n(V, W)$. Our interest in this article is restricted to the case $n = 2$.
Every element in $H^2(V, W)$ can be represented by a normal $2$-cocycle. By definition,
{\it a normal $2$-cocycle} is a map $c: V\times V\to W$ satisfying following conditions:
\begin{enumerate}
 \item $c(v_2,v_3)-c(v_1+v_2,v_3)+c(v_1,v_2+v_3)-c(v_1,v_2)=0$
\item $c(v,0) = c(0,v)=0$
\end{enumerate}
where $v,v_1,v_2,v_3\in V$. The image in $H^2(V, W)$ of a normal $2$-cocycle $c \in Z^2(V, W)$ is denoted by $[c]$.


\begin{proposition}[\cite{ObedPaper}, Prop. 1.2] \label{H_Q}
Let $V$ and $W$ be two vector spaces over $\F_2$ and $c: V \times V \to W$ be a normal $2$-cocycle.
Then the map  $q_c : V \to W$ defined by $q_c(x) = c(x, x)$ is a quadratic map.
If $V$ is finite dimensional over $\F_2$ then the correspondence
$q_c {\longleftrightarrow} [c] \in H^2(V, W)$ is a bijection between $\Quad(V, W)$ and $H^2(V, W)$.
 \end{proposition}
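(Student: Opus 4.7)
The plan is to break the proposition into three tasks: verify that $q_c$ is quadratic, show that $c \mapsto q_c$ descends to the cohomology quotient, and prove the induced map $H^2(V, W) \to \Quad(V, W)$ is a bijection when $\dim_{\F_2} V$ is finite. The scaling relation $q_c(\alpha v) = \alpha^2 q_c(v)$ is automatic over $\F_2$: normality forces $c(0, 0) = 0$, and $\alpha = 1$ is trivial. For the polar map, a few substitutions into the cocycle identity, using $2w = 0$ and normality, yield the clean formula $b_{q_c}(v, w) = c(v, w) + c(w, v)$; bilinearity of this expression then follows from another application of the cocycle identity to $c(v_1 + v_2, w)$ and $c(w, v_1 + v_2)$, where the symmetrization cancels the non-bilinear terms.

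Well-definedness on cohomology is immediate: every normalized coboundary class has a representative $\delta f$ with $f(0) = 0$ (forced by $\delta f(v,0) = -f(0) = 0$), and for such $f$, $q_{\delta f}(v) = f(2v) - 2f(v) = 0$ in $W$, so $B^2(V, W)$ lies in the kernel of $c \mapsto q_c$.

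For surjectivity, I would fix an ordered basis $e_1, \ldots, e_n$ of $V$ and, for $v = \sum_i a_i e_i$ and $w = \sum_j b_j e_j$, set
$$c(v, w) := \sum_{i=1}^{n} a_i b_i \, q(e_i) + \sum_{i < j} a_i b_j \, b_q(e_i, e_j).$$
A direct verification shows $c$ is a normalized $2$-cocycle, and the identity $q(v) = \sum_i a_i q(e_i) + \sum_{i<j} a_i a_j \, b_q(e_i, e_j)$ (valid since $a_i^2 = a_i$ in $\F_2$ and obtained by iterating $q(x+y) = q(x) + q(y) + b_q(x, y)$) confirms $q_c = q$. Injectivity can then be read off from a cardinality count: the values $q(e_i)$ for $1 \le i \le n$ and $b_q(e_i, e_j)$ for $i < j$ parametrize $\Quad(V, W)$ freely, so $|\Quad(V, W)| = |W|^{n(n+1)/2}$, and the standard computation $H^*((\Z/2)^n, \F_2) = \F_2[x_1, \ldots, x_n]$ gives $|H^2(V, W)| = |W|^{n(n+1)/2}$ as well; together with surjectivity this forces bijectivity.

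The main obstacle is the bookkeeping in the surjectivity step: checking that the basis-defined $c$ satisfies the $2$-cocycle identity for all pairs $(v, w)$, not merely for basis pairs, requires expanding both sides in coordinates and repeatedly invoking the bilinearity of $b_q$ alongside $a^2 = a$ in $\F_2$. A more conceptual route to injectivity that bypasses the cohomology ring computation would build an explicit primitive $f$ for a cocycle with $q_c = 0$ by induction along the basis, using the symmetry $c(v, w) = c(w, v)$ and the cocycle identity to prove order-independence; this is where most of the technical work would actually reside.
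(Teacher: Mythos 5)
The paper itself offers no proof of this proposition --- it is quoted from Zahinda \cite[Prop.~1.2]{ObedPaper} --- so your argument has to stand on its own, and in essence it does. The quadraticity check is right: two substitutions into the cocycle identity (first $(w,w,v)$, using $2w=0$ and normality, to get $c(w,v+w)=c(w,v)+c(w,w)$; then $(v,w,v+w)$) give $c(v+w,v+w)=c(v,v)+c(w,w)+c(v,w)+c(w,v)$, hence $b_{q_c}(v,w)=c(v,w)+c(w,v)$ --- exactly the relation the paper later invokes as \cite[Prop.~1.4]{ObedPaper} --- and one further substitution yields biadditivity, as you indicate. The descent to $H^2(V,W)$ is fine (you do implicitly use the paper's standing fact that every class has a normal representative). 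The surjectivity step is actually easier than you fear: your basis-defined $c$ is biadditive in $(v,w)$, since each summand is $a_ib_j$ times a fixed vector of $W$, and any biadditive map vanishing when an argument is $0$ satisfies the cocycle identity identically; no coordinate bookkeeping is required. The expansion $q(v)=\sum_i a_i q(e_i)+\sum_{i<j}a_ia_j\,b_q(e_i,e_j)$ and the freeness of the parameters $q(e_i)$, $b_q(e_i,e_j)$ are both correct (the latter needs the one-line check that an arbitrary assignment of these values does define a quadratic map).

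The step to tighten is injectivity. Your counting argument is only valid when $W$ is finite-dimensional: the statement assumes only that $V$ is finite-dimensional, and for infinite $W$ the equality $|\Quad(V,W)|=|H^2(V,W)|=|W|^{n(n+1)/2}$ of infinite cardinals does not upgrade a surjection to a bijection; moreover your ``more conceptual route'' is left as a sketch. Both issues are easy to repair. Either reduce to the finite-dimensional subspace of $W$ spanned by the (finite) image of the given cocycle, or argue directly: if $c$ is a normal cocycle with $q_c=0$, then $b_{q_c}=0$ forces $c(v,w)=c(w,v)$, so the associated extension $E=W\times V$ with multiplication $(w,v)(w',v')=(w+w'+c(v,v'),\,v+v')$ is abelian, and $c(v,v)=0$ makes every element of $E$ square to the identity; thus $E$ is an $\F_2$-vector space, the extension of $V$ by $W$ splits by a linear section, and the section exhibits $c$ as a coboundary, so $[c]=0$. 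This one observation replaces both the cohomology-ring computation and the inductive construction of a primitive. Within the paper's actual setting (finite groups, so $W$ finite-dimensional) your proof as written is correct.
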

 
\rem \label{group_operation} 
The bijection $q_c {\longleftrightarrow} [c]$ between $\Quad(V,W)$ and $H^2(V, W)$ as in Prop. \ref{H_Q} is denoted by $\phi$.
It allows us to associate a special $2$-groups to 
quadratic maps. The explicit description is as follows. Let $q : V\rightarrow W$ be a finite dimensional quadratic map satisfying $\langle b_q(V\times V)\rangle=W$.
Let $c$ be a normal $2$-cocycle on $V$ with coefficients in $W$ such that $\phi(q) = [c]$. On
the Cartesian product $V \times W$, consider the binary operation defined by
\begin{center}
$(v,w).(v^{\prime},w^{\prime})=(v+v^{\prime}, c(v,v^{\prime})+w+w^{\prime})$
\end{center}
for all $v, v^{\prime} \in V$ and $w, w^{\prime} \in W$. An easy calculation shows that the above binary operation defines a group structure on $V \times W$.
We denote this group by $G_q$. The identity element of $G_q$ is $(0,0)$ and the inverse of $(v,w)$
in $G_q$ is $(v, c(v,v)+w)$. The group $G_q$ turns out to be a special $2$-group and the quadratic map
associated to $G_q$ is $q$. It is called the {\it group associated to the quadratic map $q : V\rightarrow W$.} 
We shall drop the subscript $q$ from the notation $G_q$ whenever the description of $q$
is evident. \\

In view of Remark \ref{group_operation}, we fix the following notation for elements of special $2$-groups defined by
a quadratic map $q:V\to W$.
\begin{center}
 \begin{tabular}{|c|c|c|}
\hline 
{\bf Type of element}  & {\bf Notation} \\
\hline Arbitrary element & $(v, w) \in V \times W = G$ \\
\hline Central element & $(0, w) \in W = Z(G)$ \\
\hline Element in $\frac{G}{Z(G)}$ & $(v, 0) \in V = \frac{G}{Z(G)}$ \\
\hline
\end{tabular}
\end{center}
\vspace*{0.5cm}

\subsection{Extraspecial $2$-groups} Special $2$-groups whose
centre is of order $2$ are called {\it extraspecial $2$-groups}. They play an important role in
the understanding of representations of real special $2$-groups. The structure of extraspecial $2$-groups 
is well understood. To describe that we need to define the notion of central product.
Let $H$ and $K$ be two finite groups with isomorphic centers and $\zeta:Z(H)\to Z(K)$ be a group isomorphism.
A {\it central product} $G = H \circ_{\zeta} K$ of $H$ and $K$ with respect to the isomorphism $\zeta$ is defined to be the quotient
 $\frac{H \times K}{N}$ where $N:=\langle\{(h,k)\in Z(H)\times Z(K)~:~\zeta(h)k^{-1}=1\}\rangle$.  If the isomorphism $\zeta$ is evident in a context then we drop the subscript $\zeta$ and denote the central product by $H\circ K$. \\

\rem \label{classification-of-extraspecial-2-groups}
Since the order of center of an extraspecial $2$-group is $2$, the quadratic map $q$ associated to an extraspecial $2$-group
is a quadratic form. The quadratic form associated to the extraspecial $2$-group $D_4$,  the dihedral group of order $8$, is  given by $q_1(x,y)=xy$ and 
the one associated to $Q_2$, the quaternion group of order $8$,  is given by $q_2(x,y)=x^2+xy+y^2$. 
Following the notation of \cite{HL} we write $q_1 = [0,0]$ and $q_2 = [1,1]$. From the classification of regular quadratic forms
 it follows that for each $n\in \N$ 
there are exactly two extraspecial $2$-groups of order $2^{2n+1}$, namely 
$D_{4}\circ D_{4}\circ \cdots \circ D_{4}$ ($n$ copies of $D_4$)
and  $Q_{2}\circ D_{4}\circ \cdots\circ D_{4}$ ($n-1$ copies of $D_4$) (see \cite[\S 3.10.2]{Wilson}). 
Here $H\circ K$ denotes the central product of two groups $H$ and $K$ with isomorphic centers. \\

\subsection{From extraspecial to special}\label{Subsection_From-extraspecial-to-special}
Let $G$ be a special $2$-group and $q:V \rightarrow W$ be the quadratic map associated to $G$. 
For non-zero $s \in \Hom_{\F_2}(W,\F_2)$ we denote $s_*(q) := s\circ q:V\rightarrow \F_2$.
Indeed $s_*(q)$ is a quadratic form and its polar form is 
$b_{s_*(q)} := s\circ b_{q}:V\times V\rightarrow \F_2$. The form $s_*(q)$ is called the {\it transfer of $q$ by $s$}. 
If the radical $\rad(b_{s_*(q)})$ vanishes under $s_*(q) : V \to \F_2$ then $s_*(q)$ induces
a quadratic form $q_{s} : V_s:=\frac{V}{\rad(b_{s_*(q)})}\rightarrow \F_2$ as follows: 
$q_{s}(\epsilon_s(v)) = s_*(q)(v)$, $v\in V$; where $\epsilon_s : V \rightarrow V_s$ denotes the canonical surjection.
\begin{lemma} \label{radical-vanishing-gives-nice-q_s}
Let $G$ be a special $2$-group and $q$ be the quadratic map associated to $G$. Suppose that
the radical $\rad(b_{s_*(q)})$ vanishes under $s_*(q) : V \to \F_2$. Then
the quadratic form $q_s : V_s \to \F_2$ is regular and the polar map $b_{q_s} : V_s \times V_s \to \F_2$ is surjective.
\end{lemma}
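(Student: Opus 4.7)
The plan is to work entirely at the level of $\F_2$-linear algebra, translating each claim about $q_s$ on the quotient $V_s$ into an equivalent claim about $s_*(q)$ on $V$. Before anything else I would verify (for completeness) that $q_s(\epsilon_s(v)) := s_*(q)(v)$ really descends to $V_s$: for $r \in \rad(b_{s_*(q)})$, the identity
\[
s_*(q)(v+r) = s_*(q)(v) + s_*(q)(r) + b_{s_*(q)}(v,r)
\]
combined with the hypothesis $s_*(q)(r) = 0$ and the definition of the radical yields $s_*(q)(v+r) = s_*(q)(v)$.

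Next I would compute the polar form of $q_s$ explicitly. Unwinding definitions gives
\[
b_{q_s}(\epsilon_s(v), \epsilon_s(w)) = s_*(q)(v+w) - s_*(q)(v) - s_*(q)(w) = b_{s_*(q)}(v, w),
\]
so $b_{q_s}$ is simply the bilinear form induced by $b_{s_*(q)}$ on the quotient. Regularity of $q_s$ then falls out immediately: if $\epsilon_s(v) \in \rad(b_{q_s})$, the displayed identity forces $b_{s_*(q)}(v, w) = 0$ for every $w \in V$, so $v \in \rad(b_{s_*(q)})$ and hence $\epsilon_s(v) = 0$.

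Surjectivity of $b_{q_s}$ onto $\F_2$ is where I expect the only real content to lie. Since the codomain has two elements, surjectivity is equivalent to $b_{q_s} \not\equiv 0$, which by the formula above amounts to $s \circ b_q \not\equiv 0$. Suppose instead that $s \circ b_q \equiv 0$. Then $s$ annihilates the image $b_q(V \times V)$, and hence the entire $\F_2$-subspace $\langle b_q(V \times V) \rangle$ generated by it. Here I would invoke the structural fact recalled in \S \ref{Subsection_Quadratic-maps-associated-to-special-$2$-groups} that for the quadratic map of a special $2$-group one has $\langle b_q(V \times V) \rangle = W$, forcing $s = 0$ and contradicting the standing hypothesis $s \neq 0$. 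Thus $b_{q_s}$ is surjective. The main obstacle is not computational but conceptual: recognising that the surjectivity statement is exactly what requires the property $\langle b_q(V \times V) \rangle = W$ of special $2$-groups, whereas well-definedness and regularity are formal consequences of the quotient construction.
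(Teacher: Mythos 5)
Your proposal is correct and follows essentially the same route as the paper: compute $b_{q_s}(\epsilon_s(v),\epsilon_s(w)) = b_{s_*(q)}(v,w)$, deduce regularity from the definition of $\rad(b_{s_*(q)})$, and obtain surjectivity from $\langle b_q(V\times V)\rangle = W$ together with $s \neq 0$. Your preliminary check that $q_s$ is well defined on $V_s$ (using the hypothesis $s_*(q)(\rad(b_{s_*(q)}))=0$) is a small welcome addition that the paper leaves implicit in its setup.
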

\begin{proof}
We first show that the quadratic form $q_s : V_s \to \F_2$ is regular. For $v, w\in V$, we compute 
\begin{align*}
b_{q_s}(\epsilon_s({v}),\epsilon_s({w})) &= q_{s}(\epsilon_s({v}))+q_{s}(\epsilon_s({w}))-q_{s}
(\epsilon_s({v})+\epsilon_s({w}))\\
&= s(q({v}))+s(q({w}))-s(q({v}+{w}))\\
&= s(q({v})+q({w})-q({v}+{w}))\\
&= s(b_{q}({v},{w}))\\
 &= b_{s_*(q)}(v, w)
 \end{align*}
 Let $\epsilon_s({v})\in \rad(b_{q_s})$. Then from the above computation, we conclude that $b_{s_*(q)}(v,w_i)=0$ for some set $\{w_i\}_{i=1}^{r}$ of coset representatives of $V$ in $\rad(b_{s_*(q)})$. Let $w\in V$ be any arbitrary element. Then we write $w=w_i+w^{\prime}$ for a suitable $w^{\prime}\in \rad(b_{s_*(q)})$ and $1\leq i\leq r$. Then
 $$b_{s_*(q)}(v,w)=b_{s_*(q)}(v,w_i)+b_{s_*(q)}(v,w^{\prime})=0$$
Therefore $v \in \rad(b_{s_*(q)})$ and $\epsilon_s({v})=0$. Thus $\rad(b_{q_s})$ is the trivial subspace of $V_s$ and the quadratic form $q_s$ is regular. \\

As $G$ is special $2$-group, by \cite[Lemma 2.3]{DA}, $\langle b_{q}(V\times V)\rangle=W$. 
Since $s:W\rightarrow \F_2$ is non-zero, $b_{q_s}(V_s\times V_s) = s(b_{q}(V\times V)) = \F_2$. 
\end{proof}

With all notations as above, we consider the quadratic form $q_s : V_s \to \F_2$.
By Lemma \ref{radical-vanishing-gives-nice-q_s} and
Theorem \ref{special-2-group-of-a-quad-map} there exists
a special $2$-group $G_s$ such that the quadratic map associated to $G_s$ is $q_s$. We denote by $c_s$ a normal
$2$-cocycle such that $\phi(q_s)=[c_s]$ 
(see Prop. \ref{H_Q} and Remark \ref{group_operation}). \\

\rem \label{real_special}
If $G$ is a real special $2$-group then $s_*(q)(\rad(b_{s_*(q_G)})) = 0$. We refer to \cite[\S 3]{DA}
for details. Recall that a group $G$ is called {\it real} if for each $x\in G$, the conjugacy classes of $x$ and $x^{-1}$ are same. \\

We now record results on real special $2$-groups that will be useful later in the article. The following result characterizes real special $2$-groups in terms of the associated quadratic map.
 
\begin{theorem}[\cite{ObedPaper}, Th. 2.1]\label{real-group-criteria}
Let $G$ be a special $2$-group and $q : V \to W$ be the quadratic map associated to $G$. 
The following assertions are equivalent:
\begin{enumerate}
\item[$i.$] The group $G$ is real. 
\item[$ii.$] For all $v \in V$, there exists $v^{\prime} \in V$ such that $q(v^{\prime})=q(v+v^{\prime})$.
\end{enumerate}
\end{theorem}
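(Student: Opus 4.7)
My plan is to verify the equivalence by a direct computation inside the explicit group $G_q = V \times W$ with multiplication $(v,w)(v',w') = (v+v', c(v,v')+w+w')$ from Remark \ref{group_operation}, and then to use the normal $2$-cocycle identity to reduce the resulting equation on $c$ to a clean statement about $q$.

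Since $W = Z(G)$ and every element of $W$ has order $2$ (in characteristic $2$, $w + w = 0$), every central element is automatically its own inverse, so the reality condition is vacuous on $Z(G)$. Thus I need to address elements of the form $x = (v,w)$ and decide when some $y = (v', w')$ satisfies $yxy^{-1} = x^{-1}$. First I would compute $x^{-1} = (v, q(v)+w)$ using $c(v,v) = q(v)$, and then compute $yxy^{-1}$. Because $W$ is central, $w'$ drops out and a short calculation gives
\begin{equation*}
yxy^{-1} = \bigl(v,\; w + c(v'+v, v') + c(v', v) + q(v')\bigr).
\end{equation*}
Equating this with $x^{-1}$ reduces the conjugacy condition to the single identity
\begin{equation*}
c(v'+v, v') + c(v', v) + q(v') = q(v).
\end{equation*}

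The next step is to eliminate $c$ from this identity. Applying the normal $2$-cocycle relation with $(v_1, v_2, v_3) = (v, v', v+v')$ (and using $c(v,v) = q(v)$, $c(v+v', v+v') = q(v+v')$) yields
\begin{equation*}
c(v', v+v') + c(v, v') = q(v) + q(v+v').
\end{equation*}
A second application with $(v_1, v_2, v_3) = (v', v, v')$ shows $c(v'+v, v') + c(v', v) = c(v, v') + c(v', v+v')$. Combining these two relations rewrites the required condition as $q(v) + q(v+v') + q(v') = q(v)$, i.e., $q(v+v') = q(v')$. This establishes that $x^{-1}$ is conjugate to $x$ in $G$ if and only if there exists $v'\in V$ with $q(v+v') = q(v')$; since the latter is trivially true for $v = 0$, this gives both implications of the theorem.

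The only delicate point is the bookkeeping with the $2$-cocycle $c$, because $c$ is not symmetric and the cocycle identity is three-term. The main obstacle is choosing the right substitutions $(v_1, v_2, v_3)$ so that the asymmetric terms $c(v'+v, v')$ and $c(v', v)$ collapse cleanly into the symmetric quantities $q(v)$, $q(v')$, and $q(v+v')$; once the correct substitutions are found, the rest is mechanical.
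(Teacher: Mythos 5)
Your computation is correct: the conjugacy condition $yxy^{-1}=x^{-1}$ in $G_q$ does reduce, via the two cocycle substitutions you chose, to the existence of $v'$ with $q(v+v')=q(v')$, and the char-$2$ bookkeeping checks out. Note, however, that this theorem is quoted in the paper from Zahinda without proof, so there is no internal argument to compare against; measured against the standard route, your proof is more computational than it needs to be. Since $x^2\in Z(G)$ has order at most $2$, the equation $yxy^{-1}=x^{-1}$ is equivalent to $[y,x]=x^{-2}=x^{2}$, and by the very definition of the associated quadratic map and its polar map this says $b_q(\bar{y},\bar{x})=q(\bar{x})$; expanding $b_q(\bar{y},\bar{x})=q(\bar{x}+\bar{y})+q(\bar{x})+q(\bar{y})$ gives $q(\bar{x}+\bar{y})=q(\bar{y})$ immediately, with no cocycle or choice of section ever appearing. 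What your approach buys is an entirely explicit verification inside the model $V\times W$, at the cost of (a) the three-term cocycle manipulations and (b) an implicit appeal to the identification $G\cong G_q$ (Theorem \ref{special-2-group-of-a-quad-map} plus the fact that reality is an isomorphism invariant and condition $ii.$ depends only on $q$), which you should state explicitly since the theorem is about an arbitrary special $2$-group $G$, not about the model; the intrinsic commutator argument avoids both. Also, for completeness you should remark that every $v\in V$ is of the form $\bar{x}$ for some $x\in G$ (so reality for all $x$ really is equivalent to the condition for all $v$), though this is immediate.
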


The following result for real special $2$-groups is implicit in the proof of \cite[Prop. 3.3]{ObedPaper} 

\begin{lemma}\label{G_s-extraspecial} 
Let $G$ be a real special $2$-group and $q : V \to W$ be the quadratic map associated to $G$. 
For $0\neq s \in \Hom_{\F_2}(W,\F_2)$ let $q_s : V_s \to \F_2$ be the quadratic form as in 
Lemma \ref{radical-vanishing-gives-nice-q_s}. Let $G_s$ denote the special $2$-group associated to $q_s$. Then

\begin{enumerate}
\item[$i.$] The group $G_s$ is extraspecial $2$-group.
\item[$ii.$] $V_s \simeq \frac{G_s}{Z(G_s)}$.
\end{enumerate}
\end{lemma}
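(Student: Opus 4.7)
The plan is to read off both assertions from the bijective correspondence of Theorem \ref{special-2-group-of-a-quad-map} between regular quadratic maps with generating polar image and special $2$-groups, once it has been verified that $q_s$ fits into the scope of that theorem.

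First I would use Remark \ref{real_special}: the hypothesis that $G$ is real guarantees that $s_*(q)$ vanishes on $\rad(b_{s_*(q)})$, which is precisely the assumption needed in Lemma \ref{radical-vanishing-gives-nice-q_s}. That lemma then delivers two facts about $q_s : V_s \to \F_2$, namely that it is a regular quadratic form and that its polar map is surjective onto $\F_2$, i.e.\ $\langle b_{q_s}(V_s \times V_s) \rangle = \F_2$. These are exactly the two hypotheses required to apply Theorem \ref{special-2-group-of-a-quad-map} to $q_s$.

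Now Theorem \ref{special-2-group-of-a-quad-map} produces the special $2$-group $G_s$ (unique up to isomorphism) whose associated quadratic map is $q_s$. By the construction recalled in Remark \ref{group_operation}, the domain of the quadratic map attached to a special $2$-group is canonically identified with $G_s/Z(G_s)$, while the codomain is canonically identified with $Z(G_s)$. Applied to $q_s : V_s \to \F_2$, this gives $G_s/Z(G_s) \cong V_s$, which settles (ii), together with $Z(G_s) \cong \F_2$, so that $|Z(G_s)| = 2$; by definition this makes $G_s$ extraspecial, which settles (i).

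There is no substantive obstacle to this argument; once Lemma \ref{radical-vanishing-gives-nice-q_s} is in place the whole statement is essentially bookkeeping on the correspondence $q \leftrightarrow G_q$. The only subtlety is making sure that realness of $G$ is what licenses the invocation of Lemma \ref{radical-vanishing-gives-nice-q_s}: this single input, routed through Remark \ref{real_special}, is the one place where the hypothesis ``real'' is genuinely used, and it is crucial because without it the induced form $q_s$ need not be well defined on $V_s$ at all.
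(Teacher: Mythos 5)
Your proposal is correct, and for part (i) it coincides with the paper's argument: the codomain of the quadratic map associated to $G_s$ is $Z(G_s)$, and since that codomain is $\F_2$ one gets $|Z(G_s)|=2$. For part (ii), however, you take a different route. You read $V_s \simeq G_s/Z(G_s)$ directly off the statement of Theorem \ref{special-2-group-of-a-quad-map} (equivalently Remark \ref{group_operation}): since the quadratic map associated to $G_s$ ``is'' $q_s : V_s \to \F_2$, its domain must be identified with $G_s/Z(G_s)$. The paper instead proves this by hand: using the explicit model $G_s = V_s \times \F_2$ with multiplication twisted by the cocycle $c_s$, it exhibits the surjection $\xi(v,w)=v$ and shows $\ker\xi = Z(G_s)$, the nontrivial inclusion resting on the relation $b_{q_s}(v,v')=c_s(v,v')-c_s(v',v)$ and the regularity of $q_s$ established in Lemma \ref{radical-vanishing-gives-nice-q_s}. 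Your shortcut is legitimate as a citation, but note that it packages exactly the content being verified (the assertion ``the quadratic map associated to $G_q$ is $q$'' is itself proved by the kind of computation the paper reproduces), so it is only as strong as that black box; moreover the paper's explicit verification has the side benefit of identifying $Z(G_s)$ concretely as $\{0\}\times\F_2$ inside $V_s\times\F_2$, an identification that is used repeatedly later (e.g.\ in Prop.~\ref{rep-spl} and Theorem \ref{character-table}, where one needs to know which element of $Z(G_s)$ is hit by $f_{s,i}$). Your observation that realness, via Remark \ref{real_special}, is what licenses Lemma \ref{radical-vanishing-gives-nice-q_s} matches the paper's setup and is correctly placed.
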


\begin{proof}
 \begin{enumerate}
\item[$i.$]

Since the quadratic form $q_s$ associated to the special $2$-group $G_s$ takes values in $\F_2$, it follows that $|Z(G_s)| = 2$.
Therefore $G_s$ is extraspecial.
\item[$ii$.] We recall from Remark \ref{group_operation} that the underlying set of $G_s$ is the Cartesian product $V_s\times \F_2$ and its group operation is given by $(v,w)(v^{\prime},w^{\prime})=(v+v^{\prime}, c_s(v,v^{\prime})+w+w^{\prime})$, where $v,v^{\prime}\in V_s; ~w,w^{\prime}\in \F_2$. Here $c_s$ is a normal $2$-cocycle whose cohomology class corresponds 
to the quadratic form $q_s$ (see Prop. \ref{H_Q}). Regarding $V_s$ as an abelian group under addition, 
we consider the group
homomorphism $\xi: G_s\to V_s$ given by $\xi(v,w)=v$. Clearly $\xi$ is a surjection. Therefore $\frac{G_s}{\ker\xi}\cong V_s$. Now to prove the result, we need to show that $\ker \xi=Z(G_s)$. If $(v,w)\in \ker\xi$ then $v=0$. Thus for all $(v^{\prime},w^{\prime})\in G_s$, we have
\begin{center}
$$(0,w)(v^{\prime},w^{\prime})=(0+v^{\prime}, c_s(0,v^{\prime})+w+w^{\prime})=(v^{\prime}+0, c_s(v^{\prime},0)+w+w^{\prime})=(v^{\prime},w^{\prime})(0,w)$$
\end{center}
This confirms that $Z(G_s)$ contains $\ker\xi$. Now for the reverse inclusion, let $(v,w)\in Z(G_s)$. For all $(v^{\prime},w^{\prime})\in G_s$, we have
\begin{align*}
 (v,w)(v^{\prime},w^{\prime})&=(v^{\prime},w^{\prime})(v,w)\\
 \Rightarrow (v+v^{\prime}, c_s(v,v^{\prime})+w+w^{\prime})&=(v^{\prime}+v, c_s(v^{\prime},v)+w^{\prime}+w)\\
 \Rightarrow c_s(v,v^{\prime})&=c_s(v^{\prime},v).
\end{align*}
By \cite[Prop. 1.4]{ObedPaper}, we have $b_{q_s}(v,v^{\prime})=c_s(v,v^{\prime})-c_s(v^{\prime},v)$. From the above calculation, we have $b_{q_s}(v,v^{\prime})=0$ for all $v^{\prime}\in V_s$. Thus $v\in \rad(b_{q_s})$. Since $q_s$ is regular quadratic form, $v=0$ and therefore $(v,w)\in \ker\xi$.
\end{enumerate}
\end{proof}

\section{Representations of real special $2$-groups}\label{Section_Representations-of-real-special-$2$-groups}
In this section we describe irreducible representations of real special $2$-groups. We begin with 
representations of degree one.
\subsection{Linear representations of special $2$-groups}
Throughout this article, by a {\it linear representation} we mean a representation of degree one.
With this nomenclature, representations of degree at least two will be called {\it non-linear representations}. 
Finding linear representations of special $2$-groups is elementary and is based on the following well-known results.

\begin{theorem}[\cite{JL}, Th. 17.11]\label{linear-representations-through-abelianization}
Let $G$ be a finite group.
The linear representations of $G$ are precisely the lifts to $G$ of the irreducible representations of $\frac{G}{[G,G]}$. 
In particular, the number of distinct linear representations of $G$ equals the index of $[G,G]$ in $G$. 
\end{theorem}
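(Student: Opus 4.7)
The plan is to prove the two directions of the bijection between linear representations of $G$ and representations of the abelianization $G/[G,G]$, and then exploit that the latter is abelian to count them.

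First I would observe that a linear representation of $G$ is simply a group homomorphism $\rho : G \to \GL_1(\C) = \C^\times$. Since the codomain is abelian, every commutator $xyx^{-1}y^{-1}$ maps to $1$, so $[G,G] \subseteq \ker\rho$. Consequently $\rho$ factors uniquely through the natural surjection $\pi : G \to G/[G,G]$, giving a homomorphism $\bar{\rho} : G/[G,G] \to \C^\times$. Conversely, every homomorphism $\chi : G/[G,G] \to \C^\times$ pulls back along $\pi$ to a linear representation $\chi \circ \pi$ of $G$. These two assignments are mutually inverse, so the correspondence $\rho \longleftrightarrow \bar\rho$ is a bijection between the set of linear representations of $G$ and the set of linear representations of $G/[G,G]$.

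Next I would note that the group $G/[G,G]$ is abelian, and recall the standard fact that every irreducible complex representation of a finite abelian group is one-dimensional (for instance, by applying Schur's lemma to the commuting action of each group element on an irreducible module). Therefore the linear representations of $G/[G,G]$ are exactly its irreducible representations. Combined with the bijection above, this says that the linear representations of $G$ are precisely the lifts of the irreducible representations of $G/[G,G]$.

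Finally, for the counting statement, I would use that a finite abelian group $A$ has exactly $|A|$ distinct irreducible complex characters (one can see this by decomposing $A$ as a direct product of cyclic groups and counting characters on each factor, or by invoking the general fact that the sum of the squares of the dimensions of the irreducible characters equals $|A|$). Applying this to $A = G/[G,G]$ yields that the number of linear representations of $G$ equals $|G/[G,G]| = [G : [G,G]]$. The argument is essentially routine; the only subtlety worth stating carefully is the universal property giving the bijection in the first paragraph, and no serious obstacle arises.
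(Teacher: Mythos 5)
Your argument is correct and is essentially the standard one: the paper itself gives no proof, citing James--Liebeck, and your factorization through $G/[G,G]$ together with the facts that irreducible representations of a finite abelian group are one-dimensional and number $|G/[G,G]|$ is exactly the textbook argument behind that citation. No gaps worth noting.
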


Let $A=[a_{ij}]_{n\times n}$ and $B$ be two matrices over a field. Then the tensor product of matrices $A$ and $B$ is
defined by
$$A\otimes B=\begin{pmatrix}
              a_{11}B & a_{12}B & \cdots & a_{1n}B\\
              a_{21}B & a_{22}B & \cdots & a_{2n}B\\
              \vdots & \vdots & \ddots & \vdots\\
              a_{n1}B & a_{n2}B & \cdots & a_{nn}B\\
             \end{pmatrix}
$$

The following theorem provides a description of representations of direct products of groups in terms of irreducible
representations of direct factors.
\begin{theorem}[\cite{GorensteinBook}, Ch. 3, Th. 7.1] \label{representations_of_direct_product}
 Let $H$ and $K$ be two finite groups and $G=H\times K$ be the direct product of $H$ and $K$. 
Let $\rho:H\rightarrow \GL(n,\C)$ and $\sigma:K\rightarrow \GL(m,\C)$ be irreducible representations of $H$ and $K$, respectively. 
Then $\rho \otimes \sigma:G\rightarrow \GL(nm,\C)$ defined by $(\rho \otimes \sigma)(h,k)=\rho(h) \otimes \sigma(k)$ for $(h,k) \in H \times K = G$ 
is an irreducible representation of $G$. 
Moreover, every irreducible representation of $G = H\times K$ is equivalent to a representation of the form $\rho \otimes \sigma$ for a suitable choice of 
$\rho$ and $\sigma$. 
\end{theorem}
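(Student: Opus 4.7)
The plan is to establish Theorem \ref{representations_of_direct_product} via character theory, after first confirming that $\rho\otimes\sigma$ is genuinely a homomorphism.

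First I would verify that $\rho\otimes\sigma:H\times K\to\GL(nm,\C)$ is a representation. The key input is the Kronecker product identity $(A\otimes B)(C\otimes D)=(AC)\otimes(BD)$, which is a direct block-matrix computation from the definition of the Kronecker product given in the statement. Combined with the componentwise group law on $H\times K$ and the fact that $\rho,\sigma$ are homomorphisms, this gives
\[(\rho\otimes\sigma)\bigl((h_1,k_1)(h_2,k_2)\bigr)=\rho(h_1h_2)\otimes\sigma(k_1k_2)=\bigl(\rho(h_1)\otimes\sigma(k_1)\bigr)\bigl(\rho(h_2)\otimes\sigma(k_2)\bigr),\]
which is the required multiplicativity.

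Next I would compute the character of $\rho\otimes\sigma$. A second elementary block-matrix identity, $\tr(A\otimes B)=\tr(A)\tr(B)$, implies that $\chi_{\rho\otimes\sigma}(h,k)=\chi_\rho(h)\chi_\sigma(k)$. Irreducibility then follows from the standard criterion $\langle\chi,\chi\rangle_G=1$: using the componentwise indexing of $G=H\times K$, the inner product factorizes as
\[\langle\chi_{\rho\otimes\sigma},\chi_{\rho\otimes\sigma}\rangle_G=\frac{1}{|H|\cdot|K|}\sum_{h\in H,\,k\in K}|\chi_\rho(h)|^2\,|\chi_\sigma(k)|^2=\langle\chi_\rho,\chi_\rho\rangle_H\cdot\langle\chi_\sigma,\chi_\sigma\rangle_K=1\cdot 1=1,\]
the last equality being the assumed irreducibility of $\rho$ and $\sigma$. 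The same factorization shows that if either $\rho\not\cong\rho'$ or $\sigma\not\cong\sigma'$, then $\langle\chi_{\rho\otimes\sigma},\chi_{\rho'\otimes\sigma'}\rangle_G=0$, so distinct pairs yield pairwise inequivalent irreducible representations of $G$.

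Finally, to show that every irreducible representation of $G$ arises in this way, I would use a dimension count. Summing over all pairs of irreducibles,
\[\sum_{\rho,\sigma}(\dim\rho\cdot\dim\sigma)^2=\Bigl(\sum_{\rho}(\dim\rho)^2\Bigr)\Bigl(\sum_{\sigma}(\dim\sigma)^2\Bigr)=|H|\cdot|K|=|G|,\]
which already matches $\sum_{\pi}(\dim\pi)^2=|G|$ taken over all irreducibles of $G$. Since the $\rho\otimes\sigma$ are pairwise inequivalent irreducibles whose squared dimensions already saturate this equality, they must exhaust the list of irreducibles of $G$ up to equivalence. I do not anticipate a genuine obstacle here: the only non-trivial ingredients are the two Kronecker product identities together with the orthogonality relations for irreducible characters, and both are standard.
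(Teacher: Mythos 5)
Your argument is correct and complete: the Kronecker identities give that $\rho\otimes\sigma$ is a representation with character $\chi_\rho\chi_\sigma$, the factorization of the inner product over $G=H\times K$ gives irreducibility and pairwise inequivalence, and the count $\sum(\dim\rho\cdot\dim\sigma)^2=|H|\,|K|=|G|$ forces exhaustion. Note, however, that the paper offers no proof to compare against --- it imports this statement verbatim from Gorenstein (Ch.~3, Th.~7.1) as a known result; your character-theoretic route is the standard one for complex representations (essentially the James--Liebeck argument), whereas Gorenstein's treatment is module-theoretic, but for the purposes of this paper, which only uses the statement over $\C$, your proof fully suffices.
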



\rem \label{linear-representations-of-special-2-group}
For a special $2$-group $G$ the quotient $\frac{G}{Z(G)}$ is isomorphic to a direct 
product of copies of $\frac{\Z}{2\Z}$.
Clearly $\frac{\Z}{2\Z}$ has only one non-trivial irreducible representation. Thus from Theorem \ref{representations_of_direct_product}
one can write all irreducible
representations of $\frac{G}{Z(G)}$. Now using Theorem \ref{linear-representations-through-abelianization} and the 
equality $Z(G)=[G,G]$ for special $2$-groups, one can write all linear representations of special $2$-groups. \\

Interesting part of the discussion is to describe non-linear irreducible representations of special $2$-groups.
In this article we limit ourselves to real special $2$-groups. For a real special $2$-group $G$
we shall describe non-linear irreducible representations of $G$ in terms of 
non-linear irreducible representations
of extraspecial $2$-groups $G_s$. The section \ref{Subsection_Representations-of-extraspecial-$2$-groups} concerns the representations of extraspecial $2$-groups and the section \ref{Subsection_Representations-of-real-special-$2$-groups} concerns the non-linear representations of real special $2$-groups.

\subsection{Non-linear representations of extraspecial $2$-groups}\label{Subsection_Representations-of-extraspecial-$2$-groups}
Let $H$ and $K$ be two finite groups with isomorphic centers. Let $\zeta:Z(H)\to Z(K)$ be a group isomorphism. Let $N:=\langle\{(h,k)\in Z(H)\times Z(K)~:~\zeta(h)k^{-1}=1\}\rangle$ and $G = H \circ K\cong\frac{H \times K}{N}$ be the central product of groups $H$ and $K$.
Let $\varphi: H \times K \to \GL(n, \C)$ be a representation of $H \times K$ such that
$N \subseteq \ker(\varphi)$. Then $\varphi$ can be treated as a representation $\widehat{\varphi}$ of $G$ simply by defining
$\widehat{\varphi}(\overline{(h,k)})=\varphi(h,k)$, where $\overline{(h,k)}=(h,k)N$ for all $(h,k)\in H\times K$. 
The representation $\varphi$ is irreducible if and only if $\widehat{\varphi}$ is irreducible \cite[Ch. 3, Th. 7.2]{GorensteinBook}.
Since extraspecial $2$-groups are
central products of copies of non-abelian groups of order $8$, we repeatedly use the constructions like $\widehat{\varphi}$ to describe all non-linear irreducible representations of extraspecial $2$-groups.

\rem \label{representations_of_small_extraspecial}
\begin{enumerate}
\item For the dihedral group $D_4=\langle a,b~ : ~a^4=b^2=1,bab^{-1}=a^{-1}\rangle$
the homomorphism $\rho:D_4\rightarrow \GL(2,\C)$ defined by
$$\rho(a)= \left(\begin{array}{cc} 0 & 1 \\ -1 & 0 \end{array}\right), 
~~\rho(b)= \left(\begin{array}{cc} 1 & 0 \\ 0 & -1\end{array}\right)$$
is the only non-linear irreducible representation.

\item For the quaternion group $Q_2=\langle c,d~ : ~c^4=1,d^2=c^2,dcd^{-1}=c^{-1}\rangle$  
the homomorphism $\sigma:Q_2\rightarrow \GL(2, \C)$, where
$$\sigma(c)= \left(\begin{array}{cc} 0 & 1 \\ -1 & 0 \end{array}\right), 
~~\sigma(d)= \left(\begin{array}{cc} i & 0 \\ 0 & -i \end{array}\right)$$
is the only non-linear irreducible representation. \\
\end{enumerate}

Based on this remark, we have the following :

\begin{proposition}\label{non-linear_representation_extraspecial}
 Every extraspecial $2$-group has a unique non-linear representation.
\end{proposition}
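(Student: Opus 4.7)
The plan is to combine the structural classification in Remark \ref{classification-of-extraspecial-2-groups} with the tensor-product construction in Theorem \ref{representations_of_direct_product}, and then use a simple dimension count to pin down uniqueness. Write $G$ as a central product $H_1 \circ H_2 \circ \cdots \circ H_n$ of $n$ groups of order $8$ (each $H_i$ either $D_4$ or $Q_2$), so that $|G| = 2^{2n+1}$. Let $\rho_i : H_i \to \GL(2,\C)$ be the unique non-linear irreducible representation of $H_i$ given in Remark \ref{representations_of_small_extraspecial}.

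The candidate representation is $\rho := \rho_1 \otimes \rho_2 \otimes \cdots \otimes \rho_n$, viewed first as a representation of the direct product $H := H_1 \times H_2 \times \cdots \times H_n$. Iterating Theorem \ref{representations_of_direct_product} shows that $\rho$ is irreducible of degree $2^n$. The main technical step is to check that $\rho$ factors through the central product quotient $G = H/N$, where $N$ is the subgroup of $Z(H) = Z(H_1) \times \cdots \times Z(H_n)$ that gets collapsed when all the order-two centres are identified. Concretely $N$ consists of tuples $(z_1^{a_1},\ldots,z_n^{a_n})$ with $\sum_i a_i \equiv 0 \pmod 2$, where $z_i$ is the non-trivial central element of $H_i$. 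A direct inspection of the matrices in Remark \ref{representations_of_small_extraspecial} shows that $\rho_i(z_i) = -I_2$ in both the $D_4$ and the $Q_2$ case, so
\[
\rho(z_1^{a_1},\ldots,z_n^{a_n}) \;=\; (-I_2)^{a_1} \otimes \cdots \otimes (-I_2)^{a_n} \;=\; (-1)^{\sum a_i}\, I_{2^n},
\]
which equals $I_{2^n}$ precisely on $N$. Hence $N \subseteq \ker \rho$, and $\rho$ descends to an irreducible representation $\widehat{\rho}$ of $G$ of degree $2^n$, as recalled from \cite[Ch. 3, Th. 7.2]{GorensteinBook}.

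For uniqueness I will argue by dimension count. Since $[G,G] = Z(G)$ for a special $2$-group and $|Z(G)| = 2$ for an extraspecial $2$-group, Theorem \ref{linear-representations-through-abelianization} yields exactly $|G/Z(G)| = 2^{2n}$ linear irreducible representations of $G$. The identity $\sum_\chi \chi(1)^2 = |G| = 2^{2n+1}$ therefore forces the sum of squares of the degrees of the non-linear irreducible representations to equal $2^{2n+1} - 2^{2n} = 2^{2n} = (2^n)^2$. The representation $\widehat{\rho}$ of degree $2^n$ already accounts for the full total, so it is the unique non-linear irreducible representation of $G$. The only delicate point in this plan is the verification that the tensor product descends to the central product, which reduces to the observation $\rho_i(z_i) = -I_2$ common to both generators in Remark \ref{representations_of_small_extraspecial}; everything else is bookkeeping.
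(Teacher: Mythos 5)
Your proof is correct, and its existence half coincides with the paper's: both take the tensor product $\rho_1\otimes\cdots\otimes\rho_n$ of the $2$-dimensional representations of the order-$8$ factors (Theorem \ref{representations_of_direct_product}) and descend it to the central product via \cite[Ch.\ 3, Th.\ 7.2]{GorensteinBook}; your observation that $\rho_i(z_i)=-I_2$, so that the even-weight central subgroup $N$ lies in the kernel, is exactly the verification the paper carries out for $D_4\circ D_4$ with $N=\{(1,1),(a^2,a^2)\}$ and then only sketches (``this generalizes'') for $n$ factors. Where you genuinely diverge is uniqueness: the paper classifies the non-linear irreducible representations of the direct product $H_1\times\cdots\times H_n$ and argues that among them only $\rho_1\otimes\cdots\otimes\rho_n$ kills $N$, hence it alone descends to the quotient; you instead count, using Theorem \ref{linear-representations-through-abelianization} together with $[G,G]=Z(G)$ of order $2$ to get exactly $2^{2n}$ linear characters, and Theorem \ref{sum_of_squares_of_reps} to see that the remaining $2^{2n+1}-2^{2n}=(2^n)^2$ in the sum of squared degrees is already exhausted by your degree-$2^n$ representation. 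The counting route is arguably more robust for the general $n$-fold product, since it avoids the enumeration-and-kernel analysis that the paper leaves at the sketch level, and it gives the degree statement of Remark \ref{unique_non_linear_rep} for free; the paper's route yields the extra information of which representations of the direct product fail to descend. The one step you should spell out is the identification of the kernel of the surjection $H_1\times\cdots\times H_n\to H_1\circ\cdots\circ H_n$ with the even-weight subgroup of $Z(H_1)\times\cdots\times Z(H_n)$, which follows by a short induction on the iterated central-product construction; with that noted, your argument is complete.
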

\begin{proof}
Let $\gamma_{i}: 1\leq i\leq 4$ be linear representations of the group $D_{4}$.
From Theorem \ref{representations_of_direct_product} the non-linear irreducible representation of the group 
$D_{4}\times D_{4}$ are $\rho \otimes \gamma_{i}: 1\leq i\leq 4$ and $\rho \otimes \rho$, where $\rho$ is as in Remark \ref{representations_of_small_extraspecial}(1).
The  normal subgroup $N:=\{(1,1),(a^2,a^2)\}$ of the group $D_{4}\times D_{4}$ is contained in the kernel of $\rho \otimes \rho$ 
but $N$ is not contained in the kernel of $\rho \otimes \gamma_{i}$ for all $1\leq i\leq 4$. As $D_{4}\circ D_{4}=\frac{D_{4}\times D_{4}}{N}$,
from Theorem \cite[Ch. 3, Th. 7.2]{GorensteinBook} it is evident that  $\widehat{\rho \otimes \rho}$ is the only non-linear representation of $D_{4}\circ D_{4}$, which is induced from the representation ${\rho{\otimes} \rho}$ of $D_{4}\times D_{4}$.
This generalizes to the fact 
the representation $\widehat{\rho \otimes \rho \otimes \cdots \otimes \rho}$ ($l$ copies of $\rho$) is the only non-linear representation of $D_4\circ D_4\circ \cdots \circ D_4$ ($l$ copies of $D_4$). 
Similarly the group  $Q_{8}\circ D_{4}\circ \cdots\circ D_{4}$ ($l-1$ copies of $D_4$) has unique non-linear irreducible representation, namely $\widehat{\sigma{\otimes} \rho {\otimes} \cdots {\otimes} \rho}$ ($l-1$ copies of $\rho$).
\end{proof}
\rem \label{unique_non_linear_rep}
The degree of unique non-linear representation of extraspecial $2$-group of order $2^{2l+1}$ is $2^l$.\\


The following theorem is well-known standard result.

\begin{theorem}[\cite{JL}, Th. 11.12]\label{sum_of_squares_of_reps}
Let $G$ be a finite group and $\chi_1, \chi_2,\cdots \chi_k$ be the complete list of distinct irreducible characters of $G$. Then
$\displaystyle \sum_{i=1}^{k} \chi_i(1)^{2} = |G|$.
\end{theorem}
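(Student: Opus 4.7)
The plan is to analyze the regular representation of $G$ and compare its character against itself in two different ways. I would begin by recalling that the regular representation $\rho_{\mathrm{reg}} : G \to \GL(|G|, \C)$ arises from the action of $G$ on the group algebra $\C[G]$ by left multiplication. Taking $\{e_h\}_{h \in G}$ as a basis, the element $g$ sends $e_h$ to $e_{gh}$, so the associated permutation matrix has a nonzero diagonal entry exactly when $g = 1$. Hence $\chi_{\mathrm{reg}}(1) = |G|$ and $\chi_{\mathrm{reg}}(g) = 0$ for all $g \ne 1$.

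Next, by Maschke's theorem $\rho_{\mathrm{reg}}$ is completely reducible, so it decomposes as $\bigoplus_{i=1}^k m_i \rho_i$, where $\rho_1, \ldots, \rho_k$ are the irreducibles of $G$ and $m_i \in \N$ are multiplicities. I would then invoke the orthogonality relations for irreducible characters to compute
\[
m_i = \langle \chi_{\mathrm{reg}}, \chi_i \rangle = \frac{1}{|G|} \sum_{g \in G} \chi_{\mathrm{reg}}(g) \overline{\chi_i(g)} = \chi_i(1),
\]
the sum collapsing to the single term at $g = 1$ and using that $\chi_i(1)$ is a positive integer. Thus each irreducible appears in $\chi_{\mathrm{reg}}$ with multiplicity equal to its own degree.

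Finally, evaluating the resulting character identity $\chi_{\mathrm{reg}} = \sum_{i=1}^k \chi_i(1) \chi_i$ at $1 \in G$ yields
\[
|G| = \chi_{\mathrm{reg}}(1) = \sum_{i=1}^k \chi_i(1) \cdot \chi_i(1) = \sum_{i=1}^k \chi_i(1)^2,
\]
which is the desired equality.

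The substantive content of this approach is not the bookkeeping above but the two foundational inputs it rests on: Maschke's theorem, guaranteeing complete reducibility of $\C[G]$, and the orthogonality relations for irreducible characters, the latter following from Schur's lemma applied to matrix coefficients. These are the standard but nontrivial pillars of finite group representation theory over $\C$, and they are the genuine obstacle; once they are in hand, the identity above is essentially a dimension count inside the regular representation.
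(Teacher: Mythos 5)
Your proof is correct: the regular-representation argument, with the multiplicity of each irreducible computed via orthogonality as $\chi_i(1)$ and the identity evaluated at $1$, is exactly the standard proof, and it is the same argument given in the source the paper cites (James--Liebeck, Theorem 11.12); the paper itself offers no proof, only the citation. Nothing is missing, since Maschke's theorem and the orthogonality relations are legitimately assumed background here.
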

 
\begin{lemma}\label{character_of_extraspecial_2-group}
Let $G$ be an extraspecial $2$-group of order $2^{2l+1}$ and $\chi$ be the character of its unique non-linear irreducible representation. Then 
$$\chi(g) = 
\begin{cases}
2^{l}\quad \text{if $g$ is the identity element of $G$}\\
-2^{l}\quad \text{if $g$ is a non-trivial element of $Z(G)$}\\
0 \quad \quad \text{otherwise}.
\end{cases}
$$

\end{lemma}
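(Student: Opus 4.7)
The plan is to split into three cases according to whether $g = 1$, $g$ is a non-trivial central element, or $g$ lies outside the centre, and handle each using standard character-theoretic tools together with the structural facts already established for extraspecial $2$-groups.

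\emph{Case $g = 1$.} This is immediate: by Remark \ref{unique_non_linear_rep} the unique non-linear irreducible representation has degree $2^l$, so $\chi(1) = 2^l$.

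\emph{Case $g \in Z(G) \setminus \{1\}$.} Let $\rho$ denote the unique non-linear irreducible representation affording $\chi$. Since $Z(G) = \{1, z\}$ has order $2$ and $\rho$ is irreducible, Schur's lemma forces $\rho(z)$ to be a scalar matrix. Because $z^2 = 1$, that scalar is $\pm 1$. If $\rho(z) = I$, then $z \in \ker \rho$, so $\rho$ would factor through the abelian group $G/Z(G)$ and hence be linear, contradicting our assumption. Therefore $\rho(z) = -I$, giving $\chi(z) = -2^l$.

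\emph{Case $g \notin Z(G)$.} Here I would use column orthogonality of the character table. Let $\chi_1, \ldots, \chi_k, \chi$ be the full list of irreducible characters of $G$, where $\chi_1, \ldots, \chi_k$ are the linear ones and $\chi$ is the unique non-linear character (Proposition \ref{non-linear_representation_extraspecial}). Column orthogonality at the pair $(g, 1)$ with $g \neq 1$ gives
\begin{equation*}
\sum_{i=1}^{k} \chi_i(1)\,\chi_i(g) \; + \; \chi(1)\,\chi(g) \; = \; 0,
\end{equation*}
i.e.\ $\sum_{i=1}^{k} \chi_i(g) + 2^l \chi(g) = 0$. Now the linear characters of $G$ are precisely the pullbacks of the characters of the abelian quotient $G/[G,G] = G/Z(G)$ (Theorem \ref{linear-representations-through-abelianization}). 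Since $g \notin Z(G)$, its image $\bar g$ in $G/Z(G)$ is non-trivial, and the standard orthogonality relation on the abelian group $G/Z(G)$ yields $\sum_{i=1}^{k} \chi_i(g) = \sum_{\psi} \psi(\bar g) = 0$. Substituting back gives $2^l \chi(g) = 0$, hence $\chi(g) = 0$.

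The argument is essentially routine once the right viewpoint is chosen; the only subtlety is resisting the temptation to verify the central-element value by brute force and instead noting that Schur's lemma together with irreducibility and non-linearity of $\rho$ pins down $\rho(z)$ uniquely. The column-orthogonality computation is where the counting from Theorem \ref{sum_of_squares_of_reps} and the identification $[G,G] = Z(G)$ for special $2$-groups pay off, since they guarantee the clean split of the character table into $2^{2l}$ linear characters and one character of degree $2^l$.
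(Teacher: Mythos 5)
Your proof is correct, but it takes a genuinely different route from the paper. The paper argues constructively: it invokes the classification of extraspecial $2$-groups as central products (Remark \ref{classification-of-extraspecial-2-groups}), realizes the unique non-linear irreducible representation explicitly as $\widehat{\rho\otimes\cdots\otimes\rho}$ or $\widehat{\sigma\otimes\rho\otimes\cdots\otimes\rho}$ with the $2\times 2$ matrices of Remark \ref{representations_of_small_extraspecial}, and computes the character values via $\tr(A\otimes B)=\tr(A)\tr(B)$ in each of the three cases. You instead work abstractly: the degree gives $\chi(1)=2^l$; Schur's lemma plus $z^2=1$ and the observation that $z\in\ker\rho$ would force $\rho$ to factor through the abelian quotient $G/Z(G)$ pins down $\rho(z)=-I$, hence $\chi(z)=-2^l$; and column orthogonality at $(g,1)$, combined with the fact that the linear characters are exactly the pullbacks of characters of $G/[G,G]=G/Z(G)$ (so their sum at $\bar g\neq \bar 1$ vanishes), forces $\chi(g)=0$ off the centre. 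Your argument needs only the uniqueness of the non-linear irreducible (Prop. \ref{non-linear_representation_extraspecial}) and its degree (Remark \ref{unique_non_linear_rep}), and avoids both the classification and the explicit matrices, so it is shorter and more portable (it would work verbatim for extraspecial $p$-groups); the paper's computation, on the other hand, keeps the explicit tensor-product model in hand, which is in the spirit of how the representations $\varphi_{s,i}$ of special $2$-groups are later assembled, and its Schur-type consequence $\rho(z)=\pm I$ resurfaces in Remark \ref{representations}. One tiny cosmetic point: in your orthogonality display it is cleaner to write $\sum_i \chi_i(g)\overline{\chi_i(1)}$, though since all $\chi_i(1)$ are real this changes nothing.
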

\begin{proof}
By Remark \ref{classification-of-extraspecial-2-groups}, the group $G$ is isomorphic to either  
$D_4\circ D_4\circ\cdots \circ D_4$ ($l$ copies of $D_4$) or $Q_2\circ D_4\circ D_4\circ\cdots \circ D_4$ ($l-1$ copies of $D_4$). 
Let $\rho$ and $\sigma$ be unique non-linear representations of $D_4$ and $Q_2$, respectively, as in Remark \ref{representations_of_small_extraspecial}.
Let $\varphi: G\to \GL(2^l,\C)$ be the non-linear irreducible representation of $G$. Then by Prop. \ref{non-linear_representation_extraspecial},  $\varphi=\widehat{\rho{\otimes}\rho{\otimes} \cdots {\otimes}\rho}$ ($l$ copies of $\rho$) if $G\cong  D_4\circ D_4\circ\cdots \circ D_4$ ($l$ copies of $D_4$) or $\varphi=\widehat{\sigma{\otimes}\rho{\otimes}\rho{\otimes} \cdots {\otimes}\rho}$ ($l-1$ copies of $\rho$) if $G\cong  Q_2\circ D_4\circ D_4\circ\cdots \circ D_4$ ($l-1$ copies of $D_4$).
 If $G\cong  D_4\circ D_4\circ\cdots \circ D_4$ ($l$ copies of $D_4$), using Theorem \ref{representations_of_direct_product}, we know that $\varphi(\bar{1})=\rho(1)\otimes\rho(1)\otimes\cdots\otimes\rho(1)$($l$ times). Therefore $\chi_{\varphi}(\bar{1})=\tr(\rho(1)\otimes\rho(1)\otimes\cdots\otimes\rho(1)$, where $\chi_{\varphi}$ is character associated to representation $\varphi$. Using Remark \ref{representations_of_small_extraspecial} and the fact that for two matrices $A$ and $B$, $\tr(A\otimes B)=\tr(A)\tr(B)$, we get $\chi_{\varphi}(\bar{1})=2^l$.
 If $\bar{1}\neq g\in Z(G)$ then $g=\overline{(a^2,1,\cdots,1)}$, where $a^2$ is the non-trivial element of $Z(D_4)$. Then we have 
 \begin{align*}
  \chi_{\varphi}(g)&=\tr(\rho(a^2)\otimes\rho(1)\otimes\cdots\otimes\rho(1))\\
  &=\tr(\rho(a^2))\tr(\rho(1))\cdots\tr(\rho(1))\\
  &=-2^l
 \end{align*}
If $g=\overline{(g_1,g_2,\cdots,g_l)}\in G \backslash Z(G)$, then for some $1\leq i\leq l$, 
we have $g_i\in D_4 \backslash Z(D_4)$ and $\rho(g_i)=0$. Thus $\chi_{\varphi}(g)=0$. This proves the result if $G\cong  D_4\circ D_4\circ\cdots \circ D_4$ ($l$ copies of $D_4$). One can prove the result for $G\cong  Q_2\circ D_4\circ D_4\circ\cdots \circ D_4$ ($(l-1)$ copies of $D_4$) on similar lines.
\end{proof}

\subsection{Non-linear representations of real special $2$-groups}\label{Subsection_Representations-of-real-special-$2$-groups} 
In this section we describe all non-linear irreducible representations of real special $2$-groups. We begin with recalling the following.
\begin{proposition}[\cite{ObedPaper}, Prop. 3.3]\label{quad-vs-rep}
 Let $G$ be a real special $2$-group and $q: V := \frac{G}{Z(G)}\rightarrow Z(G) =: W$ be the quadratic map associated to $G$. Then
\begin{enumerate}
 \item For every non-zero $s\in \Hom_{\F_2}(W, \F_2)$, there exists a non-linear irreducible representation $\varphi$ of $G$ 
such that $\varphi(G) = G_s$.
\item Conversely, for all non-linear irreducible representations $\varphi$ of $G$, 
there exists a non-zero $s\in \Hom_{\F_2}(W, \F_2)$ such that $\varphi(G)=G_s$.
\end{enumerate}
\end{proposition}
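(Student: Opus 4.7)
My plan is to prove both parts by explicitly constructing a surjective homomorphism $\Psi : G \twoheadrightarrow G_s$ and using the existence and uniqueness of the non-linear irreducible representation $\varphi_s$ of the extraspecial $2$-group $G_s$ (Proposition \ref{non-linear_representation_extraspecial}, together with Lemma \ref{G_s-extraspecial} which requires reality to force $G_s$ to be extraspecial). For part (1) I would take $\varphi := \varphi_s \circ \Psi$; for part (2) I would pass to $G/\ker(\varphi)$ and identify it, via its associated quadratic form, with $G_s$ (using the uniqueness clause of Theorem \ref{special-2-group-of-a-quad-map}).

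For part (1), fix a normal $2$-cocycle $c$ on $V$ with $\phi(q)=[c]$ and $c_s$ on $V_s$ with $\phi(q_s)=[c_s]$. The naive attempt $(v,w)\mapsto(\epsilon_s(v),s(w))$ becomes a homomorphism precisely when $s\circ c$ agrees with $c_s\circ(\epsilon_s\times\epsilon_s)$ on $V\times V$; both are normal $2$-cocycles on $V$ with values in $\F_2$, and by the very definition of $q_s$ they both correspond under the bijection of Proposition \ref{H_Q} to the quadratic form $s_*(q)$. Hence they differ by a coboundary $(v,v')\mapsto \tau(v)+\tau(v')-\tau(v+v')$ for some $\tau:V\to\F_2$ with $\tau(0)=0$, and the corrected map
\[
\Psi(v,w) := \bigl(\epsilon_s(v),\; s(w)+\tau(v)\bigr)
\]
is then a genuine group homomorphism. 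Surjectivity is immediate, and since $\varphi_s$ is faithful (being the unique non-linear irreducible representation of $G_s$), $\varphi := \varphi_s\circ\Psi$ is a non-linear irreducible representation of $G$ with image isomorphic to $G_s$.

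For part (2), let $\varphi$ be non-linear irreducible. By Schur's lemma, $\varphi$ sends the central elementary abelian $2$-group $W=Z(G)$ into $\{\pm I\}$, so the restriction $\varphi|_W$ defines a homomorphism $s: W\to \F_2$. If $s$ were zero, $\varphi$ would factor through the abelian quotient $V$ and be linear, contradicting our hypothesis; hence $s\neq 0$. The main task is to identify $G/\ker(\varphi)$ with $G_s$. First I would show $\ker(\varphi)\cap W = \ker(s)$ and that any $(v,w)\in\ker(\varphi)$ has $v\in\rad(b_{s_*(q)})$: conjugating $(v,w)$ by arbitrary $(v',0)$ produces commutators $(0,b_q(v,v'))\in\ker(\varphi)\cap W$, forcing $s(b_q(v,v'))=0$ for all $v'$. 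Next, since $G$ is real, $s_*(q)$ vanishes on $\rad(b_{s_*(q)})$, so for $v\in\rad(b_{s_*(q)})$ Schur's lemma gives $\varphi(v,0)=\pm I$, and a counting argument yields $|G/\ker(\varphi)|=2|V_s|=|G_s|$. A direct computation of $\Phi=[\,\cdot\,,\,\cdot\,]=Z$ on the quotient confirms $G/\ker(\varphi)$ is special with centre $\F_2$, hence extraspecial, and the induced quadratic form $V_s\to\F_2$ sends $\epsilon_s(v)$ to the class of $q(v)$ in $W/\ker(s)$, which is exactly $q_s(\epsilon_s(v))$. The uniqueness statement of Theorem \ref{special-2-group-of-a-quad-map} then forces $G/\ker(\varphi)\cong G_s$, so $\varphi(G)=G_s$.

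The main obstacle is the construction of $\Psi$ in part (1); once the cohomological identification $s_*[c]=\epsilon_s^*[c_s]$ is in hand (which is essentially Proposition \ref{H_Q}) the correction $\tau$ is forced upon us, but one must be careful that $\tau$ can be chosen with $\tau(0)=0$ so that $\Psi$ respects identities. Everything else in the argument, including the kernel analysis in part (2), reduces to routine manipulations with the cocycle group law of Remark \ref{group_operation} once reality is invoked in the one crucial place where $s_*(q)(\rad(b_{s_*(q)}))=0$ is needed.
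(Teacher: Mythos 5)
Your proposal is correct, but note that for this particular proposition the paper offers no proof of its own: the statement is imported from Zahinda \cite{ObedPaper}, so the honest comparison is with the adjacent machinery the paper does prove. Your part (1) is essentially the paper's own argument for Prop.~\ref{number of surjective homomorphisms} and Prop.~\ref{rep-spl}: there too one fixes normal cocycles $c$ and $c_s$ with $\phi(q)=[c]$ and $\phi(q_s)=[c_s]$, observes that $\Inf(c_s)$ and $s_*(c)$ have the same diagonal and are therefore cohomologous via some $\lambda$ with $\lambda(0)=0$, and defines $f_{s,i}(v,w)=(\epsilon_s(v),s(w)-\lambda(v)-h_i(v))$; your $\Psi$ is exactly $f_{s,i}$ with the radical character $h_i$ taken to be zero, and your worry about arranging $\tau(0)=0$ is vacuous, since evaluating the coboundary identity at $(0,0)$ forces it. Your part (2), by contrast, has no counterpart in this paper, and its outline is sound: Schur's lemma gives $\varphi(0,w)=(-1)^{s(w)}I$ with $s\neq 0$ by non-linearity; normality of $\ker\varphi$ together with the fact that $b_q$ is the commutator map forces $\ker\varphi\subseteq\rad(b_{s_*(q)})\times W$; reality (Remark \ref{real_special}) makes $s_*(q)$ vanish on the radical, so $\varphi(v,0)=\pm I$ there, whence $|\ker\varphi|=|\rad(b_{s_*(q)})|\,|W|/2$ and $|\varphi(G)|=2|V_s|=|G_s|$; and the quotient is extraspecial with associated form $q_s$, so the uniqueness clause of Theorem \ref{special-2-group-of-a-quad-map} yields $\varphi(G)\cong G_s$. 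What your route buys is a self-contained proof of the converse direction inside the paper's own cocycle framework, and in fact your kernel analysis already points at the paper's refinement: the sign function $v\mapsto\varphi(v,0)$ on $\rad(b_{s_*(q)})$ is precisely what separates the $2^k$ inequivalent representations $\varphi_{s,i}$. The only places to tighten are the steps you label ``counting argument'' and ``direct computation'': spell out that the scalar matrices in $\varphi(G)$ are exactly $\pm I$, that the preimage of $Z(\varphi(G))$ in $G$ is $\rad(b_{s_*(q)})\times W$, and that commutators and squares in $\varphi(G)$ land in $\{\pm I\}$; none of this conceals a difficulty.
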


We refine the first part of Prop. \ref{quad-vs-rep} by observing that for every non-zero $s\in \Hom_{\F_2}(W, \F_2)$, 
there are exactly $|\rad(b_{s_*(q)})|$ many of inequivalent non-linear irreducible representations of $G$ 
such that $\varphi(G)=G_s$. Here $|\rad(b_{s_*(q)})|$ denotes the size of the radical $\rad(b_{s_*(q)})$.\\

Before stating next proposition, we record some definitions, which will be used later.
 \begin{enumerate}
  \item Let $c: V\times V\to W$ be a normal $2$-cocycle and $s\in \Hom_{\F_2}(W,\F_2)$. Then $s_*(c): V\times V\to \F_2$ defined by $s_*(c)(v,v^{\prime})=s(c(v,v^{\prime}))$ for all $v,v^{\prime}\in V$ is a normal $2$-cocycle. It is called the {\it transfer of $c$ by $s$}.
 \item Recall from \S \ref{Subsection_From-extraspecial-to-special} that $V_s=\frac{V}{\rad(b_{s_*(q)})}$. Let $\epsilon_s:V\to V_s$ be the canonical surjection and $c_s: V_s\times V_s \to W$ be a normal $2$-cocycle. Then $\Inf(c_s): V\times V \to \F_2$ defined by $\Inf(c_s)(v, v^{\prime})=c_s(\epsilon_s(v), \epsilon_s(v^\prime))$ for $v, v^{\prime}\in V$ is a normal $2$-cocycle. It is called the {\it inflation} of $c_s$.
 \end{enumerate}

\begin{proposition}\label{number of surjective homomorphisms}
Let $G$ be a real special $2$-group and  $q:V:=\frac{G}{Z(G)}\rightarrow Z(G) =: W$ be the quadratic
map associated to $G$. Then for every non-zero $s\in \Hom_{\F_2}(W, \F_2)$ there exists at least $|\rad(b_{s_*(q)})|$ many surjective homomorphisms form $G$ to the extraspecial $2$-group $G_s$.
\end{proposition}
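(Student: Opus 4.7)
The plan is to construct one surjective homomorphism $\Psi\colon G \to G_s$ via a cohomological matching argument, and then generate a large family of surjective homomorphisms by twisting $\Psi$ pointwise with homomorphisms into the centre $Z(G_s)$. Since this construction will yield $|V|$ distinct surjective homomorphisms, and $\rad(b_{s_*(q)}) \subseteq V$ gives $|V| \geq |\rad(b_{s_*(q)})|$, the proposition will follow.

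To produce $\Psi$, fix normal $2$-cocycles $c$ and $c_s$ representing $q$ and $q_s$ via Proposition \ref{H_Q}, so that $G$ and $G_s$ acquire the explicit presentations of Remark \ref{group_operation}. The key cohomological observation is that both $s_*(c)\colon V\times V\to \F_2$ and $\Inf(c_s)\colon V\times V\to \F_2$ are normal $2$-cocycles realizing the same quadratic form, namely $v\mapsto s(q(v)) = q_s(\epsilon_s(v))$. By the bijection in Proposition \ref{H_Q}, this forces $[s_*(c)] = [\Inf(c_s)]$ in $H^2(V,\F_2)$, so a normalized $1$-cochain $\beta\colon V\to \F_2$ exists with $\delta \beta = s_*(c) - \Inf(c_s)$. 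Defining $\Psi(v,w) := (\epsilon_s(v),\, s(w)+\beta(v))$, the coboundary identity for $\beta$ is exactly what is needed for $\Psi$ to respect the group operations of $G$ and $G_s$; surjectivity is immediate from that of $\epsilon_s$ and $s$.

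For the twisting step, observe that for each linear functional $\ell \in \Hom_{\F_2}(V, \F_2)$ the map $\chi_\ell(v,w) := (0, \ell(v))$ is a group homomorphism $G \to Z(G_s)$ (since $\ell$ is additive and $Z(G_s) \cong \F_2$ is abelian). Because $Z(G_s)$ is central in $G_s$, the pointwise product $\Psi_\ell := \Psi\cdot \chi_\ell$, given by $\Psi_\ell(v,w) = (\epsilon_s(v),\, s(w)+\beta(v)+\ell(v))$, is again a homomorphism, and is surjective for the same reason as $\Psi$. For $\ell \neq \ell'$, any $v_0\in V$ with $\ell(v_0)\neq \ell'(v_0)$ certifies $\Psi_\ell(v_0,0)\neq \Psi_{\ell'}(v_0,0)$, so distinct functionals yield distinct homomorphisms. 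This produces $|\Hom_{\F_2}(V, \F_2)| = |V|$ distinct surjective homomorphisms from $G$ to $G_s$, which is at least $|\rad(b_{s_*(q)})|$.

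The only conceptually non-routine step is the cohomological identification $[s_*(c)] = [\Inf(c_s)]$ in $H^2(V,\F_2)$; this is a direct consequence of the bijection in Proposition \ref{H_Q} applied to the common quadratic form $s_*(q) = q_s \circ \epsilon_s$. All remaining verifications reduce to straightforward computations in the cocycle presentation from Remark \ref{group_operation}.
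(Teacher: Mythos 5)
Your proof is correct, and its core step is the same as the paper's: you identify $[s_*(c)]=[\Inf(c_s)]$ in $H^2(V,\F_2)$ because both cocycles realize the quadratic form $v\mapsto s(q(v))=q_s(\epsilon_s(v))$, and you use the resulting $1$-cochain to define the base homomorphism $(v,w)\mapsto(\epsilon_s(v),\,s(w)+\beta(v))$; the paper's $\lambda$ in equation \ref{lambda} plays exactly the role of your $\beta$. Where you genuinely differ is in how the family is generated. The paper fixes a complement of $\rad(b_{s_*(q)})$ in $V$, extends each of the $2^k$ functionals $t_i\in\Hom_{\F_2}(\rad(b_{s_*(q)}),\F_2)$ to $h_i:V\to\F_2$, and builds $f_{s,i}(v,w)=(\epsilon_s(v),s(w)-\lambda(v)-h_i(v))$ directly; you instead twist the single base homomorphism by the central-valued homomorphisms $(v,w)\mapsto(0,\ell(v))$ for all $\ell\in\Hom_{\F_2}(V,\F_2)$, which is legitimate since these land in $Z(G_s)$ (note the needed normality $c_s(0,0)=0$). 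Your packaging is cleaner --- no choice of complement --- and it even yields the stronger count of $|V|\geq|\rad(b_{s_*(q)})|$ distinct surjective homomorphisms. The trade-off appears downstream: in Prop.~\ref{rep-spl} and Theorem~\ref{character-table} what matters is the restriction of the twist to the radical, because the character of $G_s$ vanishes outside $Z(G_s)$, so two twists $\ell,\ell'$ agreeing on $\rad(b_{s_*(q)})$ give representations $\varphi_s\circ\Psi_\ell$ and $\varphi_s\circ\Psi_{\ell'}$ with identical characters; your $|V|$-sized family therefore collapses to exactly the paper's $2^k$ inequivalent representations, and the paper's parametrization by functionals on the radical is the one tailored to the later inequivalence and counting arguments.
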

\begin{proof}
Let $s \in \Hom_{\F_2}(W, \F_2)$ be a non-zero map.
Since $\rad(b_{s_*(q)})$ is a subspace of $V$, we have $|\rad(b_{s_*(q)})|=2^{k}$ for some $k\in \N$. Since the order of  $\Hom_{\F_2}(\rad(b_{s_*(q)}), \F_2)$ is same as that of $\rad(b_{s_*(q)})$, we have $2^{k}$ linear maps from 
$\rad(b_{s_*(q)})$ to $\F_2$. We enumerate these linear maps as $\{t_i\}$; $1\leq i\leq 2^k$. For rest of the proof we fix a vector space complement $V^{\prime}$ of $\rad(b_{s_*(q)})$ in $V$. Thus we write $V=\rad(b_{s_*(q)})\oplus V^{\prime}$. Define $h_i:V \rightarrow \F_2$ by $h_i(v)=t_i(x),$ where $v = (x,y) \in V$ with $x\in\rad(b_{s_*(q)})$ and $y\in V^{\prime}$. \\ 

Recall that there exists a normal $2$-cocycle $c$
such that $\phi(q) = [c]$, where $\phi$ is the isomorphism between $H^2(V,W)$ and $\Quad(V,W)$ as  
in Prop. \ref{H_Q}. Let $s\in \Hom_{\F_2}(W, \F_2)$ and $s_*(c):V\times V\rightarrow \F_2$ 
 be the transfer of $c$ by $s$. 
Let $c_{s}$ be a normal $2$-cocycle such that $\phi(q_s) = [c_{s}]$, 
where $\phi$ is the isomorphism between $H^2(V_s,\F_2)$ and $\Quad(V_s,\F_2)$ as defined in Prop. \ref{H_Q}. 
Now we have 
$$ \Inf(c_{s})(v,v)=q_{s}(\epsilon_s(v))=s(q(v))=s_*(c)(v,v)$$

Thus both $[\Inf(c_{s})]$ and $[s_*(c)]$ are preimages of the same quadratic map under the isomorphism $\phi: H^2(V,\F_2)\rightarrow \Quad(V,\F_2)$ 
given in Prop. \ref{H_Q}. Therefore $\Inf(c_{s})$ and $s_*(c)$ are cohomologous. Thus there exists a map $\lambda: V\rightarrow \F_2$ such that $\lambda(0)=0$ and 
\begin{equation}\label{lambda}
 \Inf(c_{s})(v,v^{\prime})= s_*(c)(v,v^{\prime})-\lambda(v+v^{\prime})+\lambda(v)+\lambda(v^{\prime}).
\end{equation}

We are now ready to define surjective homomorphisms from $G$ to $G_s$ for each $i; 1 \leq i \leq 2^k$. 
Consider $f_{s,i}:G\rightarrow G_s$ defined by 
$$f_{s,i}(v,w)=(\epsilon_s(v), s(w)-\lambda(v)-h_i(v))$$ for $v\in V, w\in W$.  Here for group elements of $G$ we follow the notation
as in the table of \S \ref{Subsection_Quadratic-maps-associated-to-special-$2$-groups}.
That each $f_{s,i}:G\rightarrow G_s$ is a homomorphism follows from the following
direct computation.
\begin{align*}
 f_{s,i}((v,w)(v^{\prime},w^{\prime}))&=f_{s,i}(v+v^{\prime},c(v,v^{\prime})+w+w^{\prime})\\
                                      &=(\epsilon_s(v+v^{\prime}),s(c(v,v^{\prime})+w+w^{\prime})-\lambda(v+v^{\prime})-h_i(v+v^{\prime}))\\
                                      &= (\epsilon_s(v)+\epsilon_s(v^{\prime}),c_{s}(\epsilon_s(v), 
\epsilon_s(v^{\prime})+s(w)+s(w^{\prime})-\lambda(v)-\lambda(v^{\prime})-h_i(v)-h_i(v^{\prime}))\\
                                      &=(\epsilon_s(v),s(w)-\lambda(v)-h_i(v))(\epsilon_s(v^{\prime}),
s(w^{\prime})-\lambda(v^{\prime})-h_i(v^{\prime}))\\
                                      &=f_{s,i}((v,w))f_{s,i}((v^{\prime},w^{\prime}))
\end{align*}
for $(v,w),(v^{\prime},w^{\prime})\in G$.

To check the surjectivity, let $(v_s,w_s)\in G_s$ where $v_s\in V_s$ and $w_s\in \F_2$. 
Since maps $\epsilon_s$ and $s$ are surjective, there exist $v\in V$ and $w\in W$ such that $\epsilon_s(v)=v_s$ and $s(w)=w_s$. Further, since $\lambda(v), h_i(v)\in \F_2$, there exist $w_1, w_2\in W$ such that $\lambda(v)=s(w_1)$ and $h_i(v)=s(w_2)$. The following calculation confirms that $f_{s, i}(v, w+w_1+w_2) = (v_s, w_s)$. 
\begin{align*}
 f_{s,i}(v,w+w_1+w_2)&=(\epsilon_s(v),s(w+w_1+w_2)-\lambda(v)-h_i(v))\\
                     &=(\epsilon_s(v),s(w)+s(w_1)+s(w_2)-\lambda(v)-h_i(v))\\
                     &=(v_s,w_s)
\end{align*}
Since for $i\neq j$ there exists $v\in V$ such that $h_i(v)\neq h_j(v)$, we have $f_{s,i}\neq f_{s,j}$.
Thus $f_{s, i}; 1 \leq i \leq 2^k$ are distinct homomorphisms. These are $|(\rad(b_{s_*(q)}))| = 2^k$ in number.
\end{proof}

\begin{proposition}\label{rep-spl}
Let $G$ be a real special $2$-group and  $q:V:=\frac{G}{Z(G)}\rightarrow Z(G) =: W$ be the quadratic
map associated to $G$. Then for every non-zero $s\in \Hom_{\F_2}(W, \F_2)$ 
there exist at least $|(\rad(b_{s_*(q)}))|$ many inequivalent non-linear irreducible representations $\varphi$ of $G$
such that $\varphi(G)=G_s$.
\end{proposition}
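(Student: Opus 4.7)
The plan is to compose the surjective homomorphisms $f_{s,i}: G \to G_s$ constructed in Proposition \ref{number of surjective homomorphisms} with the unique non-linear irreducible representation $\psi_s: G_s \to \GL(2^l, \C)$ of the extraspecial $2$-group $G_s$, provided by Proposition \ref{non-linear_representation_extraspecial}, where $|G_s| = 2^{2l+1}$. Setting $\varphi_{s,i} := \psi_s \circ f_{s,i}$ for $1 \leq i \leq 2^k$, with $2^k = |\rad(b_{s_*(q)})|$, yields the required family of candidate representations.

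Verifying the two easy assertions is immediate. Since $f_{s,i}$ is surjective, any $\varphi_{s,i}$-invariant subspace is automatically $\psi_s$-invariant; irreducibility of $\varphi_{s,i}$ thus follows from that of $\psi_s$. Further, $\psi_s$ is faithful (the non-trivial central element of $G_s$ maps to $-I$ by Lemma \ref{character_of_extraspecial_2-group}), so $\varphi_{s,i}(G) = \psi_s(f_{s,i}(G)) = \psi_s(G_s) \cong G_s$, consistent with the notation of Proposition \ref{quad-vs-rep}.

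The heart of the argument is to prove that the $\varphi_{s,i}$ are pairwise inequivalent. I would do this by comparing characters. By Lemma \ref{character_of_extraspecial_2-group}, $\chi_{\psi_s}$ vanishes off $Z(G_s)$ and equals $\pm 2^l$ on $Z(G_s)$, with sign determined by whether the argument is the identity of $G_s$. For $g = (x, 0) \in G$ with $x \in \rad(b_{s_*(q)})$, Lemma \ref{G_s-extraspecial}(ii) gives $\epsilon_s(x) = 0$, so
\[
f_{s,i}(x, 0) = (0,\, -\lambda(x) - h_i(x)) \in Z(G_s),
\]
and therefore
\[
\chi_{\varphi_{s,i}}(x, 0) = (-1)^{\lambda(x) + h_i(x)} \cdot 2^l.
\]
By the construction in Proposition \ref{number of surjective homomorphisms}, the restriction of $h_i$ to $\rad(b_{s_*(q)})$ coincides with the linear functional $t_i$, and $\{t_i\}_{i=1}^{2^k}$ exhausts $\Hom_{\F_2}(\rad(b_{s_*(q)}), \F_2)$. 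Hence for $i \neq j$ there exists $x \in \rad(b_{s_*(q)})$ with $t_i(x) \neq t_j(x)$; at this $x$ the two character values differ in sign, so $\varphi_{s,i}$ and $\varphi_{s,j}$ are inequivalent.

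The main obstacle is recognizing that $\rad(b_{s_*(q)})$ is precisely where the varying twists $h_i$ can be character-theoretically distinguished: elements $(x, 0)$ with $x$ in the radical are exactly the ones whose images under $f_{s,i}$ land in $Z(G_s)$, which in turn are the only elements where $\chi_{\psi_s}$ is non-zero. Once this observation is made, the inequivalence step reduces to the elementary linear-algebraic fact that distinct linear functionals on $\rad(b_{s_*(q)})$ separate some point, and the remaining verifications are formal consequences of Propositions \ref{number of surjective homomorphisms} and \ref{non-linear_representation_extraspecial}.
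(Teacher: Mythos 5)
Your proposal is correct and follows essentially the same route as the paper: compose the surjections $f_{s,i}$ with the unique non-linear irreducible (faithful) representation of $G_s$, and distinguish the resulting representations by evaluating characters at elements $(x,0)$ with $x\in\rad(b_{s_*(q)})$, where $f_{s,i}(x,0)$ lands in $Z(G_s)$ and the values $\pm 2^l$ separate $h_i$ from $h_j$. The only cosmetic difference is that you make the sign $(-1)^{\lambda(x)+h_i(x)}$ and the exhaustion of $\Hom_{\F_2}(\rad(b_{s_*(q)}),\F_2)$ by the $t_i$ explicit, which the paper states slightly more informally.
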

\begin{proof}
Let $s\in \Hom_{\F_2}(W, \F_2)$ be a non-zero linear map and $k$ be such that $|\rad(b_{s_*(q)})|=2^k$. 
Let $\varphi_s$ denote the unique non-linear irreducible  faithful representation of the extraspecial $2$-group $G_s$.
Then $\varphi_{s,i} := \varphi_s \circ f_{s,i}$ for $1\leq i \leq 2^k$, where $f_{s,i}$ is as in the proof of Prop. \ref{number of surjective homomorphisms}, are irreducible representations of $G$ of the degree same as that of $\varphi_s$.
Since $f_{s,i}:G\to G_s$ is surjective and $\varphi_s$ is faithful, we have $\varphi_{s,i}(G)=\varphi_s(f_{s,i}(G))=\varphi_s(G_s)\cong G_s$.
We claim that $\varphi_{s,i}$ and $\varphi_{s,j}$ are equivalent if and only if $i=j$. 
Let $\chi_{s,i}$ and $\chi_{s,j}$ be the characters of representations $\varphi_{s,i}$ and $\varphi_{s,j}$, respectively. Then $\chi_{s,i}=\chi_{s}f_{s,i}$ and $\chi_{s,j}=\chi_{s}f_{s,j}$.
Recall from \cite[p. 16, Cor. 2]{SerreRepnBook} that $\varphi_{s,i}$ and $\varphi_{s,j}$ are equivalent if and only if 
$\chi_{s,i}(v,w)=\chi_{s,j}(v,w)$ for every $(v,w)\in G$. We use it to complete the proof. \\

If $|(\rad(b_{s_*(q)}))|=1$ then there is nothing to prove. If $|(\rad(b_{s_*(q)}))|>1$, then for $i\neq j$ there exists $(v,0)\in G$, where $v\in \rad(b_{s_*(q)})$, such that $h_i(v)\neq h_j(v)$. Thus
$$f_{s,i}(v,0)=(0,-\lambda(v)-h_i(v)),~~~~~ f_{s,j}(v,0)=(0,-\lambda(v)-h_j(v))$$
Here $h_i : V \to \F_2$ is the homomorphism as defined in the proof of Prop. \ref{number of surjective homomorphisms}. \\

It follows that one of $f_{s,i}(v,0)$ and $f_{s,j}(v,0)$ is the identity element of the group $G_s$,
while the other is the non-trivial element of $Z(G_s)$. Without loss of generality we assume that 
$$
f_{s,i}(v,0)=(0,0) \in Z(G_s),~~~~f_{s,j}(v,0)=(0,1) \in Z(G_s).$$
Let the order of $G_s$ be $2^{2l+1}$. Then by Lemma \ref{character_of_extraspecial_2-group}
\begin{center}
$ \chi_{s,i}(v,0)=\chi_{s}f_{s,i}(v,0)=\chi_s(0,0)=2^{l}$\\
$\chi_{s,j}(v,0)=\chi_{s}f_{s,j}(v,0)=\chi_s(0,1)=-2^{l}$\\
\end{center}
This proves that $\varphi_{s,i}$ and $\varphi_{s,j}$ are inequivalent if $i\neq j$.
\end{proof}

The following theorem describes all non-linear irreducible representations of real special $2$-groups.

\begin{theorem}\label{complete_list_representations}
Let $G$ be a real special $2$-group and  $q:V:=\frac{G}{Z(G)}\rightarrow Z(G) =: W$ be the quadratic
map associated to $G$. Then $\{\varphi_{s, i} : s  \in \Hom_{\F_2}(W, \F_2), 1 \leq i \leq 2^k\}$ is the complete
list of non-linear irreducible representations of $G$; where $\varphi_{s, i}$ are as in 
Prop. \ref{rep-spl} and $2^k$ is the size of the radical $\rad(b_{s_*(q)})$.
\end{theorem}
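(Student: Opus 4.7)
My plan is to combine the lower bound from Proposition \ref{rep-spl} with the sum-of-squares identity of Theorem \ref{sum_of_squares_of_reps}: once I exhibit enough pairwise inequivalent non-linear irreducible representations so that their squared degrees, together with those of the linear representations, already add up to $|G|$, completeness follows automatically.

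First I would verify that $\varphi_{s,i}$ and $\varphi_{s',j}$ are inequivalent whenever $s \neq s'$; the case $s = s'$, $i \neq j$ is already treated in Proposition \ref{rep-spl}. The key observation is that the unique non-linear irreducible representation $\varphi_s$ of the extraspecial $2$-group $G_s$ must be non-trivial on $Z(G_s)$, for otherwise it would factor through the abelian quotient $G_s/Z(G_s)$ and fail to be non-linear; hence $\ker \varphi_s \cap Z(G_s) = 0$. Using the explicit formula $f_{s,i}(0,w) = (0, s(w))$ (valid because $\lambda(0) = h_i(0) = 0$ from the construction in Proposition \ref{number of surjective homomorphisms}), this yields $\ker \varphi_{s,i} \cap W = \ker s$, so the hyperplane $\ker s$, and thus $s$ itself, is recovered from the equivalence class of $\varphi_{s,i}$.

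Next I would count degrees. Since $G_s$ is extraspecial of order $2 \cdot |V_s| = 2^{\dim_{\F_2} V - k_s + 1}$, Remark \ref{unique_non_linear_rep} gives $\deg \varphi_{s,i} = 2^{(\dim_{\F_2} V - k_s)/2}$. The linear representations number $|V|$, each of degree one, by Theorem \ref{linear-representations-through-abelianization} combined with $[G,G] = Z(G)$. Summing squared degrees yields
\[
|V| + \sum_{0 \neq s \in \Hom_{\F_2}(W,\F_2)} 2^{k_s} \cdot 2^{\dim_{\F_2} V - k_s} \;=\; |V| + (|W|-1)\,|V| \;=\; |V|\cdot|W| \;=\; |G|,
\]
and Theorem \ref{sum_of_squares_of_reps} closes the argument: no further non-linear irreducible representation can exist.

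The step I expect to require the most care is the $s \neq s'$ inequivalence, since it relies on tracing the construction of $f_{s,i}$ from Proposition \ref{number of surjective homomorphisms} to pin down $\ker \varphi_{s,i} \cap W$. Everything after that is a matter of plugging in degree formulas and arithmetic.
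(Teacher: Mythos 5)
Your proposal is correct and follows essentially the same route as the paper: establish pairwise inequivalence of the $\varphi_{s,i}$ (the case $s\neq s'$ being settled by looking at the restriction to $W=Z(G)$ via $f_{s,i}(0,w)=(0,s(w))$), then invoke the sum-of-squares identity with the count $|V|+\sum_{s\neq 0}2^{k_s}\cdot 2^{\dim V-k_s}=|G|$. The only cosmetic difference is that you detect $s$ from $\ker\varphi_{s,i}\cap W=\ker s$, whereas the paper compares the character values $\pm 2^l$ on a central element where $s_p$ and $s_q$ differ; these are interchangeable.
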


\begin{proof}
Let $|G|=2^n$ and $|Z(G)|=2^m$. The number of non-zero linear maps from $Z(G)$ to $\F_2$ is $|Z(G)|-1=2^m-1$. We denote these linear maps by
$s_1, s_2, \cdots, s_{2^{m}-1}$.
First we prove that $\varphi_{s_{p},i}\nsim\varphi_{s_{q},j}$ if either $p\neq q$ or $i\neq j$.
If $p=q$ then it follows from the proof of Prop. \ref{rep-spl} that $\varphi_{s_{p},i}\nsim\varphi_{s_{q},j}$ if $i\neq j$. Suppose $p\neq q$. Recall that $\varphi_{s_{p},i}(G)\cong G_{s_p}$ and $\varphi_{s_{q},j}(G)\cong G_{s_q}$. If $G_{s_p}\ncong G_{s_q}$ then $\varphi_{s_{p},i}\nsim\varphi_{s_{q},j}$. Suppose that $G_{s_p}\cong G_{s_q}$ and $|G_{s_p}|=|G_{s_q}|=2^{2l+1}$. Since $s_p\neq s_q$, there exist $w\in W$ such that $s_p(w)\neq s_q(w)$. Then $f_{s_p,i}(0,w)=(0,s_p(w))$ and $f_{s_q,j}(0,w)=(0,s_q(w))$. Now by the same argument as in the proof of Prop. \ref{rep-spl}, without loss of generality, we assume that $\chi_{s_p,i}(0,w)=2^l$ and $\chi_{s_q,j}(0,w)=-2^l$. Here $\chi_{s_p,i}$ and $\chi_{s_q,j}$ are characters associated with representations $\varphi_{s_{p},i}$ and $\varphi_{s_{q},j}$ respectively.

To prove the theorem we use Theorem \ref{sum_of_squares_of_reps} and show that squares of degrees of representations $\varphi_{s, i}$ and linear
representations add up to $2^n$. \\

 Let $|G_{s}| = 2^{2l+1}$ 
for some non-zero linear map $s\in \Hom_{\F_2}(W, \F_2)$. Since $G_{s}$ are extraspecial $2$-groups, we have $|Z(G_{s})|=2$.
Using Lemma \ref{G_s-extraspecial}(2), we have
$$ \frac{|\frac{G}{Z(G)}|}{|\frac{G_{s}}{Z(G_{s})}|}=|\rad b_{s_*(q)}|$$
Therefore $|\rad b_{s_*(q)}|=\frac{2^{n-m}}{2^{2l}}$.
By Theorem \ref{rep-spl}, there are at least $|\rad b_{s_*(q)}|$ 
number of non-linear irreducible inequivalent representations $\varphi$ of $G$ such that $\varphi(G)\cong G_{s}$. 
The degree of these representations is same as the degree of the faithful representation $\varphi_{s}$ of $G_{s}$. 
We know that the degree of the non-linear faithful representation of extraspecial $2$-groups of 
order $2^{2l+1}$ is $2^{l}$ (see Remark \ref{unique_non_linear_rep}). 
Now we compute the sum of squares of degrees of irreducible representations of $G$.   
\begin{align*}
\left|\frac{G}{Z(G)}\right|.1^2+\sum_{j=1}^{2^{m}-1}|\rad b_{s_{j_*}(q)}|.(2^{l_j})^2 &= \left|\frac{G}{Z(G)}\right|.1^2+\sum_{j=1}^{2^{m}-1}\frac{|\frac{G}{Z(G)}|}
{|\frac{G_{s_j}}{Z(G_{s_j})}|}.(2^{l_j})^2\\
                                                                         &= 2^{n-m}+\sum_{j=1}^{2^{m}-1}\frac{2^{n-m}}{2^{2l_j}}.(2^{l_j})^2\\
                                                                         &= 2^{n-m}+\sum_{j=1}^{2^{m}-1} 2^{n-m}\\
                                                                         &= 2^{n-m}+ (2^{m}-1)2^{n-m}\\
                                                                         &= 2^{n}\\
                                                                         &= |G|.
 \end{align*}
 Therefore $G$ can not afford  non-linear representations except $\varphi_{s, i}$ and $\{\varphi_{s, i} : s  \in \Hom_{\F_2}(W, \F_2), 1 \leq i \leq 2^k\}$ is the complete list of non-linear irreducible representations of $G$.
\end{proof}

\section{Character table of real special $2$-groups}\label{Section_Character-table-of-real-special-$2$-groups}
The aim of this section is to provide a method to write the character table of real special $2$-groups using quadratic
maps alone.

\subsection{Characters of real special $2$-groups}
In what follows, we keep the notations of Prop. \ref{rep-spl}.
\begin{proposition} \label{spl_to_extraspl}
Let $G$ be a real special $2$-group.
Let $\chi_{s,i}$ be the character of the representation 
$\varphi_{s,i}$, as described in Prop. \ref{complete_list_representations}
Let the order of $G_s$ be $2^{2l+1}$. Then
 $$ \chi_{s,i}(g)=\left\{ \begin{array}{lll}
                    2^l  & \text{if $f_{s,i}(g)=1$}\\
                           -2^l &  \text{if $f_{s,i}(g)$ is the non-trivial element of $Z(G_s)$}\\ 
                          0 & \text{otherwise}
                   \end{array} \right. $$

\end{proposition}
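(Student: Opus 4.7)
The plan is to reduce the character computation on $G$ to the already-known character formula on the extraspecial $2$-group $G_s$, exploiting the factorization $\varphi_{s,i} = \varphi_s \circ f_{s,i}$ that comes from the very definition of $\varphi_{s,i}$ in Prop. \ref{rep-spl}. Since characters are invariant under composition with homomorphisms in the sense that $\chi_{\varphi \circ f}(g) = \chi_\varphi(f(g))$, I would begin by writing
\[
\chi_{s,i}(g) = \tr(\varphi_{s,i}(g)) = \tr(\varphi_s(f_{s,i}(g))) = \chi_s(f_{s,i}(g)),
\]
where $\chi_s$ denotes the character of the unique non-linear irreducible representation $\varphi_s$ of $G_s$.

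Next I would invoke Lemma \ref{character_of_extraspecial_2-group}, which describes $\chi_s$ on the extraspecial $2$-group $G_s$ of order $2^{2l+1}$: it equals $2^l$ at the identity, $-2^l$ at the unique non-trivial central element, and $0$ elsewhere. Applying this formula with argument $f_{s,i}(g)$ immediately gives the three-case formula asserted in the statement. The surjectivity of $f_{s,i}$ (established in Prop. \ref{number of surjective homomorphisms}) guarantees that all three cases are genuinely attained as $g$ ranges over $G$, but this is not needed for the validity of the formula itself.

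I do not anticipate any real obstacle: the entire proof is a one-line substitution followed by a case analysis on where $f_{s,i}(g)$ lands inside $G_s$, namely whether it is the identity, the non-trivial central element, or an element outside $Z(G_s)$. The only conceptual content was already packaged into Lemma \ref{character_of_extraspecial_2-group} and the construction of the surjections $f_{s,i}$ in the previous section; the present proposition is essentially a bookkeeping statement that transports that extraspecial character formula back to the original real special $2$-group $G$ via the pullback.
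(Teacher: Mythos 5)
Your proposal is correct and matches the paper's proof: both use the factorization $\chi_{s,i}=\chi_s\circ f_{s,i}$ coming from $\varphi_{s,i}=\varphi_s\circ f_{s,i}$ and then apply Lemma \ref{character_of_extraspecial_2-group} to evaluate $\chi_s$ at $f_{s,i}(g)$. No gaps; the remark about surjectivity is an accurate (and optional) addition.
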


\begin{proof}
Let $\varphi_s$ be the non-linear irreducible representation of the extraspecial $2$-group $G_s$ and $\chi_s$ be the character of  $\varphi_s$.
Since $\varphi_{s,i}=\varphi_s \circ f_{s,i}$, we have $\chi_{s,i}=\chi_s \circ f_{s,i}$.
Now the proof follows immediately from Lemma \ref{character_of_extraspecial_2-group}.
\hfill $\square$
\end{proof}
\rem \label{representations}
Let $\diag(1,1,\cdots,1)$ denote the identity matrix of order $2^l$ and $\diag(-1,-1,\cdots,-1)$ denote the diagonal matrix of order $2^l$ with diagonal entries equal to $-1$. Then
$$ \varphi_{s,i}(g)=\left\{ \begin{array}{ll}
                              \diag(1,1,\cdots,1)  & \text{if $\chi_{s,i}(g)=2^l$}\\
                              \diag(-1,-1,\cdots,-1)       &  \text{if $\chi_{s,i}(g)=-2^l$}
                                   \end{array} \right. $$
 
{\definition \label{sign} For the character $\chi_{s,i}$, we define 
$$ \sign(\chi_{s,i}(g)) =\left\{\begin{array}{ll} -1 & \text{if $\chi_{s,i}(g)=-2^l$}\\
                                   1 & \text{if $\chi_{s,i}(g)=2^l$}
                 \end{array} \right. $$}

\begin{theorem} \label{character-table}
Let $G$ be a real special $2$-group and $q: V \to W$ be the quadratic map associated to $G$. 
Let $s \in \Hom_{\F_2}(W, \F_2)$. Then
  \begin{enumerate}
\item  $\chi_{s,i}(v,w)\neq 0$ if and only if $v\in \rad(b_{s_*(q)})$.
\item For $(0,w)\in Z(G)$ we have
$$ \chi_{s,i}(0,w)=\left\{ \begin{array}{lll}
                    2^l  & \text{if $s(w)=0$}\\
                           -2^l &  \text{if $s(w)=1$}
                   \end{array} \right. $$
where $l$ is defined by $|G_s| = 2^{2l+1}$. 
                                      
\item Let $\{v_1,v_2,\cdots, v_k\}$ be a basis of $\rad(b_{s_*(q)})$. 
Then 
$$ \chi_{s,i}(v_j,0)=\left\{ \begin{array}{ll}
                            -2^{l} & \text{if $A_{j,i}=1$}\\
                             2^{l} & \text{if $A_{j,i}=0$} 
                           \end{array} \right. $$
 where $A_{j,i}$ denotes the coefficient of $2^j$ in the binary expansion $i-1=\sum_{j=0}^{k-1}A_{j,i}2^{j}$. 
 \item Let $g \in G$ be an element with $g=\prod_{j=1}^{r}(v_{i_j},0)(0,w)$ for $1\leq i_1<i_2<\cdots<i_r\leq k$ then
 $$\chi_{s,i}(g)=\prod_{j=1}^{r}\sign(\chi_{s,i}(v_{i_j},0)).\sign(\chi_{s,i}(0,w)).2^l$$
 \end{enumerate}
\end{theorem}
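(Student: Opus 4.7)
The unifying strategy is that each of the four claims reduces, via Proposition \ref{spl_to_extraspl}, to computing the homomorphism $f_{s,i}:G\to G_s$ on the element in question. Recall that $\chi_{s,i}(g)$ equals $2^l$ if $f_{s,i}(g)$ is the identity of $G_s$, equals $-2^l$ if $f_{s,i}(g)$ is the non-trivial central element of $G_s$, and vanishes otherwise. Thus parts (1)--(3) become direct calculations using the explicit formula
$$f_{s,i}(v,w)=(\epsilon_s(v),\,s(w)-\lambda(v)-h_i(v))$$
from the proof of Prop.~\ref{number of surjective homomorphisms}.

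For part (1), Lemma \ref{G_s-extraspecial}(ii) identifies $Z(G_s)$ with the kernel of the projection $\xi:G_s\to V_s$, so $f_{s,i}(v,w)\in Z(G_s)$ if and only if $\epsilon_s(v)=0$, i.e.\ $v\in\rad(b_{s_*(q)})$. Combined with Prop.~\ref{spl_to_extraspl} this yields the claimed equivalence. For part (2), specializing the formula at $(0,w)$ and using $\lambda(0)=0$ together with $h_i(0)=t_i(0)=0$ (linearity of $t_i$) gives $f_{s,i}(0,w)=(0,s(w))$; Prop.~\ref{spl_to_extraspl} then produces the claimed values of $\pm 2^l$.

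Part (3) requires a bit more care, and I expect it to be the main obstacle because the statement implicitly fixes an enumeration of the representations $\varphi_{s,i}$. For $v_j\in\rad(b_{s_*(q)})$ one computes $f_{s,i}(v_j,0)=(0,\lambda(v_j)+h_i(v_j))$ in $\F_2$, and since $v_j\in\rad(b_{s_*(q)})$ we have $h_i(v_j)=t_i(v_j)$. Evaluation $t\mapsto(t(v_1),\ldots,t(v_k))$ is a bijection between $\Hom_{\F_2}(\rad(b_{s_*(q)}),\F_2)$ and $\F_2^k$. The plan is to exploit this freedom by enumerating the $t_i$ so that
$$t_i(v_j)=A_{j,i}+\lambda(v_j)\pmod{2},$$
where $A_{j,i}$ is as in the statement; such an enumeration exists because both sides vary bijectively over $\F_2^k$ as $i$ ranges from $1$ to $2^k$. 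With this choice, $f_{s,i}(v_j,0)=(0,A_{j,i})$, and Prop.~\ref{spl_to_extraspl} gives precisely the asserted sign pattern.

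Finally, for part (4), Remark \ref{representations} shows that whenever $(v,w)$ has non-zero character, $\varphi_{s,i}(v,w)$ is the scalar matrix $\sign(\chi_{s,i}(v,w))\cdot I_{2^l}$. Using that $\varphi_{s,i}$ is a group homomorphism, $\varphi_{s,i}(g)$ factors as the product of the scalars $\varphi_{s,i}(v_{i_j},0)$ and $\varphi_{s,i}(0,w)$, giving a scalar matrix with eigenvalue
$$\prod_{j=1}^{r}\sign(\chi_{s,i}(v_{i_j},0))\cdot\sign(\chi_{s,i}(0,w)).$$
Taking the trace of this $2^l\times 2^l$ scalar matrix completes the proof.
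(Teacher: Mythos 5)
Your proposal is correct and follows essentially the same route as the paper: reduce everything to the explicit formula for $f_{s,i}$ and the character values of the extraspecial group $G_s$, and in part (3) fix the enumeration of the linear maps $t_i$ on $\rad(b_{s_*(q)})$ so that $t_i(v_j)=A_{j,i}+\lambda(v_j)$, which is exactly the paper's ``suitable permutation of indices'' via $\theta$ and $t_i'$. Part (4) likewise matches the paper's use of the homomorphism property together with the fact that the relevant matrices are $\pm$ the identity.
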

\begin{proof}
 \begin{enumerate}
  \item We have $\chi_{s,i}=\chi_s \circ f_{s,i}$,  where $\chi_s$ is the irreducible non-linear character of the extraspecial $2$-group $G_s$ and $f_{s,i}$ is the homomorphism as defined in Prop. \ref{number of surjective homomorphisms}.
Let $(v,w)\in G$, then
\begin{align*}
  \chi_{s,i}(v,w) &=  \chi_{s}(f_{s,i}(v,w))\\
                  &=  \chi_{s}(\epsilon_s(v),s(w)-\lambda(v)-h_i(v))
\end{align*}
By Lemma \ref{character_of_extraspecial_2-group}, $\chi_s(x, y)\neq 0$ if and only if $x=0$.
Thus $\chi_{s,i}(v,w)$ = 0 if and only if $\epsilon_s(v)=0$. This happens precisely
when $v\in \rad(b_{s_*(q)})$. Hence the result follows.
\item Let $(0,w)\in Z(G)$ then
\begin{align*}
  \chi_{s,i}(0,w) &=  \chi_{s}(f_{s,i}(0,w))\\
                  &=  \chi_{s}(\epsilon_s(0),s(w)-\lambda(0)-h_i(0))\\
                  &=  \chi_s(0,s(w))
\end{align*}
Let $l$ be defined by $|G_s|=2^{2l+1}$. From Lemma \ref{character_of_extraspecial_2-group} and the
above calculation, we have $$ \chi_{s,i}(0,w)=\left\{ \begin{array}{ll}
                    2^l  & \text{if $s(w)=0$}\\
                           -2^l &  \text{if $s(w)=1$}
                   \end{array} \right. $$
\item Let $\{v_1, v_2,\cdots , v_k\}$ be a basis of $\rad(b_{s_*(q)})$.
Let $A_{j, i}$ be defined by the binary expansion $$i-1=A_{0,i}2^{0}+A_{1,i}2^{1}+\cdots +A_{k-1,i}2^{k-1},$$
where $1\leq i\leq 2^k$. 
Consider a map $\lambda : V \to \F_2$ as in equation \ref{lambda} in section \ref{Subsection_Representations-of-real-special-$2$-groups}. 
We define a map $\theta: \rad(b_{s_*(q)}) \to \F_2$ by 
$$\theta\left(\sum_{j=1}^{r}v_{i_j}\right)=\sum_{j=1}^{r}\lambda(v_{i_j}) \text{ for } 1 \leq i_1<i_2<\cdots<i_r\leq k$$
Notice that the map $\theta$ is nothing but the linear extension to  $\rad(b_{s_*(q)})$ of the restriction of $ \lambda$ to the basis $\{v_1, v_2,\cdots , v_k\}$. Thus $\theta \in \Hom_{\F_2}(\rad(b_{s_*}(q)), \F_2)$.
We recall from the proof of Prop. \ref{number of surjective homomorphisms}
the description of maps $h_i : V \to \F_2$; $1\leq i\leq 2^{k}$.
By definition, $h_i(v_j)=t_i(v_j)$
as $v_j\in \rad(b_{s_*(q)})$. Let $t_i^{\prime}\in \Hom_{\F_2}(\rad(b_{s_*}(q)), \F_2)$ be defined by $t_i^{\prime}(v_j)=A_{j,i}$. Since both $\{t_i: 1\leq i\leq 2^k\}$ and $\{ \theta-t_i^{\prime}: 1\leq i\leq 2^k\}$ denote the same set, namely the set of all the linear maps from $\rad(b_{s_*}(q))$ to $\F_2$, by a suitable  permutation of indices we may assume that $t_i=\theta-t_i^{\prime}$. Thus we have
\begin{align*}
  \chi_{s,i}(v_j,0) &=  \chi_s(f_{s,i}(v_j,0))\\
                  &=  \chi_s(\epsilon_s(v_j),s(0)-\lambda(v_j)-h_i(v_j))\\
                  &=  \chi_s(0,-\lambda(v_j)-t_i(v_j))\\
                  &=  \chi_s(0,-\lambda(v_j)-\theta(v_j)+t_i^{\prime}(v_j))\\
                  &= \chi_s(0,-\lambda(v_j)-\lambda(v_j)+t_i^{\prime}(v_j))\\
                  &= \chi_s(0,t_i^{\prime}(v_j))\\
                  &= \chi_s(0,A_{j,i})
\end{align*}
Therefore
\begin{align*}
    \chi_{s,i}(v_j,0) &= \left\{ \begin{array}{ll}
                      \chi_s(0,1)  & \text{if $A_{j,i}=1$}\\
                       \chi_s(0,0) &  \text{if $A_{j,i}=0$}
                   \end{array} \right.
 = \left\{ \begin{array}{ll}
                      -2^l  & \text{if $A_{j,i}=1$}\\
                       2^l &  \text{if $A_{j,i}=0$}
                   \end{array} \right.
\end{align*}
\item Let $\{v_1, v_2, \cdots, v_k\}$  be a basis of $\rad(b_{s_*(q)})$.
Take $g=\prod_{j=1}^{r}(v_{i_j},0)(0,w) \in G$; where $1\leq i_1<i_2<\cdots<i_r\leq k$  then
\begin{align*}
\varphi_{s,i}(g)
 &=\varphi_{s,i}\left(\prod_{j=1}^{r}(v_{i_j},0)(0,w)\right)\\
 &=\prod_{j=1}^{r}\varphi_{s,i}(v_{i_j},0)\varphi_{s,i}(0,w)
 \end{align*}
 Now from Prop. \ref{character-table}(2) and Prop. \ref{character-table}(3) the value of $\chi_{s,i}(v_j, 0)$ and $\chi_{s,i}(0,w)$ is either $2^{l}$ or $-2^{l}$. From Remark \ref{representations}, we have that $\varphi_{s,i}(v_j,0)$ and $\varphi_{s,i}(0,w)$ are either $\diag(1,1,\cdots,1)$ or $\diag(-1,-1,\cdots, -1)$. Now from above calculation and definition \ref{sign}, we have
 $$\chi_{s,i}\left(\prod_{j=1}^{r}(v_{i_j},0)(0,w)\right)=\prod_{j=1}^{r}\sign(\chi_{s,i}(v_{i_j},0)).\sign(\chi_{s,i}(0,w)).2^l.$$
Hence the result follows.
\end{enumerate}
\end{proof}
We summarize Prop. \ref{character-table} in the following table. Notations of table are same as Prop. \ref{character-table}. Let $|G|=2^n$, $|Z(G)|=2^m$ and $|G_s|=2^{2l+1}$. We recall from the proof of Theorem \ref{complete_list_representations} 
that in this case $|\rad(b_{s_*(q)}|=2^{n-m-2l}$. We fix an ordered basis 
$\{v_1, v_2,\cdots , v_{k}\}$ of  $|\rad(b_{s_*(q)}|$, where $k = {n-m-2l}$.

\begin{center}
{\bf Table A}\\
 \vspace*{0.5cm}

 \resizebox{\linewidth}{!}{%
 \begin{tabular}{|c|c|c|}
\hline 
{\bf Type of element}  & ${\mathbf \chi_{s,i}(v,w)}$ & {\bf Number of elements} \\
\hline 
$\{(v,w) :  v \notin \rad(b_{s_*(q)}\}$ & $0$ & $2^n-2^{n-2l}$ \\
\hline
$\{(0,w) : s(w)=0\}$ & $2^l$& \\
$\{(0,w) : s(w)=1\}$ & $-2^l$& $2^m$\\
\hline

$\{(v_j,0) : A_{j,i}=1\}$ & $-2^l$&  \\
$\{(v_j,0) : A_{j,i}=0\}$ & $2^l$ & $(n-m-2l).2^m$\\
$\{(v_j,w) : 0\neq (0, w) \in Z(G)\}$ & $\sign(\chi_{s,i}(v_j,0)).\sign(\chi_{s,i}(0,w)).2^l$&\\
\hline
 $\displaystyle \prod_{j=1}^{r}(v_{i_j},0)(0,w)$ where & &\\
 $1 \leq i_1<i_2<\cdots<i_r\leq k$ and $r\geq 2$ &  
 $\displaystyle \prod_{j=1}^{r}\sign(\chi_{s,i}(v_{i_j},0)).\sign(\chi_{s,i}(0,w)).2^l$& $(2^{n-m-2l}-(n-m-2l)-1).2^m$\\ 
\hline
\end{tabular}}

\end{center}

\subsection{Conjugacy classes of real special $2$-groups}\label{Subsection_Conjugacy-classes-of-real-special-$2$-groups}
In this section we form conjugacy classes of a real special $2$-group $G$. We frequently use the following
well-known result to distinguish two conjugacy classes.

\begin{proposition}[\cite{JL}, Prop. 15.5]\label{conjugacy-via-characters}
Let $G$ be a finite group and $\chi_1, \chi_2,\cdots, \chi_k$ be the complete set of inequivalent irreducible characters of $G$. 
Then two elements $g, h \in G$ are conjugates if and only if $\chi_i(g)=\chi_i(h)$ for all $1\leq i\leq k$.
\end{proposition}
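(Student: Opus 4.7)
The plan is to prove both implications of the biconditional, treating the forward direction as immediate and concentrating on the reverse direction via the spanning property of irreducible characters.

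For the forward direction, suppose $h = xgx^{-1}$ for some $x \in G$. Then for any representation $\rho : G \to \GL(n, \C)$ with character $\chi_\rho$, we have $\rho(h) = \rho(x)\rho(g)\rho(x)^{-1}$, and since the trace is invariant under conjugation, $\chi_\rho(h) = \chi_\rho(g)$. Applying this to each irreducible representation gives $\chi_i(g) = \chi_i(h)$ for all $i$, so every character is a class function.

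For the reverse direction, I would invoke the standard fact from representation theory of finite groups: the irreducible characters $\chi_1, \ldots, \chi_k$ form a $\C$-basis of the space of class functions on $G$ (this is the nontrivial input, proved via Schur orthogonality together with the equality of the number of irreducible characters and the number of conjugacy classes). Given this, for each conjugacy class $C$ of $G$, the indicator function $\mathbf{1}_C : G \to \C$, being a class function, admits a unique expansion
\begin{equation*}
\mathbf{1}_C = \sum_{i=1}^{k} \alpha_i^{(C)} \chi_i, \qquad \alpha_i^{(C)} \in \C.
\end{equation*}
If $\chi_i(g) = \chi_i(h)$ for every $i$, then $\mathbf{1}_C(g) = \mathbf{1}_C(h)$ for every conjugacy class $C$. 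Choosing $C$ to be the conjugacy class of $g$ forces $\mathbf{1}_C(h) = 1$, i.e.\ $h$ lies in the conjugacy class of $g$, which is precisely the claim.

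The main obstacle, should one wish to avoid black-boxing the basis statement, is to establish that the irreducible characters span the space of class functions. A clean way to do this is to show that any class function orthogonal to every $\chi_i$ under the inner product $\langle f, f' \rangle = \frac{1}{|G|}\sum_{x \in G} f(x)\overline{f'(x)}$ must vanish; combined with Schur's orthogonality relations $\langle \chi_i, \chi_j\rangle = \delta_{ij}$, this yields the required basis property. An equivalent formulation, perhaps more directly suited here, is the column orthogonality relation $\sum_i \chi_i(g)\overline{\chi_i(h)} = |C_G(g)| \cdot \delta_{[g],[h]}$, from which the conclusion follows immediately: if $\chi_i(g) = \chi_i(h)$ for all $i$, then plugging $h = g$ into the sum shows the left-hand side equals $|C_G(g)| \neq 0$, so $[g] = [h]$.
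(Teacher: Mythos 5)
Your proof is correct; note that the paper itself does not prove this statement but simply quotes it from James--Liebeck (Prop.\ 15.5), and your argument --- trace invariance for the forward direction, and for the converse the expansion of the indicator function of a conjugacy class in the basis of irreducible characters (equivalently, column orthogonality) --- is essentially the standard textbook proof being cited. No gaps.
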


\rem \label{conjucagy_via_linear_reps}
Let $G$ be a real special $2$-group and $q : V \rightarrow W$ be the quadratic map associated to $G$.
The elements of $W = Z(G)$ form conjugacy classes containing only one element. 
Let $v_1 \neq v_2 \in V$ and $v_1\neq 0,~v_2\neq 0$ in $V$. Then elements of the set $\{(v_1,w) : w\in W\}$ are not conjugate to any element of set $\{(v_2,w) : w\in W\}$. This follows from Prop. \ref{conjugacy-via-characters} using the fact that linear characters of $G$ are the lifts of irreducible characters of $\frac{G}{Z(G)}$. We therefore conclude that the sets $\{(v,w) : w\in W\}$ indexed by non-zero $v\in V$ are mutually disjoint.
This way, we divide the non-central elements of $G$ into $|\frac{G}{Z(G)}|-1$ number of sets
each containing $|Z(G)|$ elements.\\
\begin{theorem}\label{conjugacy_classes_not-in-rad}
Let $G$ be a real special $2$-group and $q: V:=\frac{G}{Z(G)}\to Z(G)=:W$ be the quadratic map associated to $G$.
Let $v \in V$. If $v \notin \rad(b_{s_*(q)})$ for all non-zero linear maps $s:W\rightarrow \F_2$ 
then $\{(v,w) : w\in W\}$ is a conjugacy class of $G$.
\end{theorem}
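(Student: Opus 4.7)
The plan is to apply the character criterion for conjugacy (Prop. \ref{conjugacy-via-characters}) to the set $\{(v,w) : w \in W\}$, by showing that every irreducible character of $G$ is constant on this set, and then use Remark \ref{conjucagy_via_linear_reps} to rule out conjugation with elements outside the set.

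First I would handle the linear characters. By Theorem \ref{linear-representations-through-abelianization}, every linear character of $G$ factors through $\frac{G}{[G,G]} = \frac{G}{Z(G)}$. Since any two elements $(v, w_1)$ and $(v, w_2)$ of our set differ by the central element $(0, w_1 + w_2) \in Z(G)$, they have the same image in $\frac{G}{Z(G)}$, and hence every linear character takes the same value on them.

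Next I would handle the non-linear characters using Theorem \ref{complete_list_representations}, which lists all non-linear irreducible characters as $\chi_{s,i}$ for non-zero $s \in \Hom_{\F_2}(W, \F_2)$ and $1 \le i \le 2^k$ with $|\rad(b_{s_*(q)})| = 2^k$. By Theorem \ref{character-table}(1), $\chi_{s,i}(v,w) \neq 0$ forces $v \in \rad(b_{s_*(q)})$. By hypothesis, $v$ lies outside $\rad(b_{s_*(q)})$ for every non-zero $s$, so $\chi_{s,i}(v, w) = 0$ for every $w \in W$ and for every non-linear irreducible character. In particular, all non-linear characters are constant (equal to zero) on $\{(v, w) : w \in W\}$.

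Combining the two pieces, every irreducible character of $G$ is constant on $\{(v,w) : w \in W\}$, so by Prop. \ref{conjugacy-via-characters} any two elements of this set are conjugate in $G$. Conversely, Remark \ref{conjucagy_via_linear_reps} shows that no element of this set is conjugate to any element $(v', w')$ with $v' \neq v$, because the sets indexed by distinct non-zero classes of $V$ are disjoint unions of conjugacy classes. Therefore $\{(v,w) : w \in W\}$ is a single conjugacy class. I do not anticipate a genuine obstacle here: the theorem is essentially a bookkeeping consequence of Theorem \ref{character-table}(1) together with the standard character criterion, and the only point to be careful about is verifying that the linear characters contribute no obstruction, which is immediate from the fact that $[G,G] = Z(G)$ for a special $2$-group.
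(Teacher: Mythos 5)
Your proposal is correct and follows essentially the same route as the paper: linear characters are constant on $\{(v,w): w\in W\}$ because they factor through $\frac{G}{Z(G)}$, non-linear characters vanish there by Theorem \ref{character-table}(1) since $v\notin\rad(b_{s_*(q)})$ for every non-zero $s$, and Prop. \ref{conjugacy-via-characters} together with Remark \ref{conjucagy_via_linear_reps} finishes the argument. The only cosmetic difference is that you explicitly cite Theorem \ref{complete_list_representations} to enumerate the non-linear characters, which the paper leaves implicit.
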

\begin{proof}
Let $v \notin \rad(b_{s_*(q)})$ for all non-zero linear maps $s:W\rightarrow \F_2$. In view of 
Prop. \ref{conjugacy-via-characters} and Remark \ref{conjucagy_via_linear_reps} it is enough to show that $\chi(v,w) = \chi(v,w')$ for all $w, w' \in Z(G)$ and for 
all irreducible characters $\chi$.

First suppose that $\chi$ is linear. Since linear representations of $G$ are precisely the lifts of irreducible representations of $\frac{G}{Z(G)}$, 
the image of all the elements in the set $\{(v,w) : w\in W\}$ are same under all the linear representations of $G$. \\

Now suppose that $\chi$ is non-linear. Since $v \notin \rad(b_{s_*(q)})$, by Theorem \ref{character-table}(1)
$\chi(v,w)=0$ for all the elements of set $\{(v,w) : w\in W\}$. The result now follows from 
Prop. \ref{conjugacy-via-characters}.
\end{proof}
 
\begin{theorem}\label{conjugacy_classes_rad}
Let $G$ be a real special $2$-group and $q: V:=\frac{G}{Z(G)}\to Z(G)=:W$ be the quadratic map associated to $G$.
Let $v\in V$ and $\mathcal S_v := \{s \in \Hom_{\F_2}(Z(G), \F_2) : v\in \rad(b_{s_*(q)})\}$.
Then the conjugacy class of element $(v,w)\in G$ is $\{(v,w+w^{\prime}) : s(w^{\prime})=0 \text{ for all }s\in \mathcal S_v\}. $ 
\end{theorem}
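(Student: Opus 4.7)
The plan is to invoke Prop. \ref{conjugacy-via-characters} and characterize when $(v, w)$ and $(v, w + w')$ agree on every irreducible character. A preliminary observation is that since $G/Z(G) = V$ is abelian, the first coordinate is preserved under conjugation, so any conjugate of $(v, w)$ already takes the form $(v, w + w')$ for some $w' \in W$. Hence the conjugacy class is contained in $\{(v, w + w') : w' \in W\}$, and the task reduces to identifying the precise subset.

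First I would dispense with the linear characters. Since $[G, G] = Z(G)$, Theorem \ref{linear-representations-through-abelianization} tells us that every linear character of $G$ factors through $V$; hence for every linear character $\chi$ and every $w' \in W$ one has $\chi(v, w + w') = \chi(v, w)$, so the linear characters impose no constraint on $w'$.

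The main calculation concerns the non-linear characters $\chi_{s,i}$ parametrized by non-zero $s \in \Hom_{\F_2}(W, \F_2)$, which by Theorem \ref{complete_list_representations} exhaust the non-linear irreducibles. In the case $s \notin \mathcal S_v$, i.e.\ $v \notin \rad(b_{s_*(q)})$, Theorem \ref{character-table}(1) ensures that both $\chi_{s,i}(v, w)$ and $\chi_{s,i}(v, w + w')$ vanish, so equality is automatic. In the case $s \in \mathcal S_v$, I would combine Prop. \ref{spl_to_extraspl} with the explicit formula $f_{s,i}(v, w) = (\epsilon_s(v), s(w) - \lambda(v) - h_i(v))$ from the proof of Prop. \ref{number of surjective homomorphisms}; using $\epsilon_s(v) = 0$, this reduces the computation to $\chi_s(0, s(w) - \lambda(v) - h_i(v))$. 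Replacing $w$ by $w + w'$ alters the second coordinate by $s(w')$, and Lemma \ref{character_of_extraspecial_2-group} then yields
\[
\chi_{s,i}(v, w + w') = (-1)^{s(w')}\,\chi_{s,i}(v, w),
\]
which coincides with $\chi_{s,i}(v, w)$ if and only if $s(w') = 0$.

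Assembling the three cases, $(v, w + w')$ has the same value as $(v, w)$ under every irreducible character if and only if $s(w') = 0$ for all non-zero $s \in \mathcal S_v$, which is precisely the condition in the theorem (the zero map, if it is regarded as lying in $\mathcal S_v$, imposes no further restriction). The conclusion then follows from Prop. \ref{conjugacy-via-characters}. I do not anticipate any serious obstacle: the theorem is essentially a bookkeeping consequence of the character formulas already established in Section \ref{Section_Character-table-of-real-special-$2$-groups}, together with the elementary fact that conjugation in $G$ preserves the image in $V$.
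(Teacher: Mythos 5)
Your proposal is correct and follows essentially the same route as the paper: restrict attention to the fiber $\{(v,w+w') : w'\in W\}$, check that linear characters impose no condition, and compare the non-linear characters $\chi_{s,i}$ via the formula $\chi_{s,i}(v,w)=\chi_s(\epsilon_s(v), s(w)-\lambda(v)-h_i(v))$ and Lemma \ref{character_of_extraspecial_2-group}, concluding with Prop. \ref{conjugacy-via-characters}. The only cosmetic difference is that you justify the containment in the fiber by noting that conjugation acts trivially modulo $Z(G)=[G,G]$, whereas the paper cites Remark \ref{conjucagy_via_linear_reps} (linear characters); both are fine.
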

\begin{proof}
By Remark \ref{conjucagy_via_linear_reps}, we know that the conjugacy class of $(v,w)$ is a subset of 
$\{(v,w^{\prime}) : w^{\prime} \in Z(G)\}$.
Let $w_1\in Z(G)$ be such that $s(w_1)=1$ for some non-zero $s \in \mathcal S_v$. 
Then $$\chi_{s,i}(v,w)=\chi_s(\epsilon_s(v),s(w)-\lambda(v)-h_{i}(v)) =(0,s(w)-\lambda(v)-h_{i}(v))$$ 
$$\chi_{s,i}(v,w+w_1)=\chi_s(\epsilon_s(v),s(w+w_1)-\lambda(v)-h_{i}(v))=(0,s(w)+1-\lambda(v)-h_{i}(v))$$
It is clear that one of $\chi_{s,i}(v,w)$ and $ \chi_{s,i}(v,w+w_1)$ is $\chi_s(0,0)=2^l$, while the other one is $\chi_s(0,1)=-2^l$, where $2^{l}$ is the degree of the character $\chi_{s,i}$.
Thus $\chi_{s,i}(v,w_1) \neq \chi_{s,i}(v,w_1+w)$ and it follows from Prop. \ref{conjugacy-via-characters} that $(v,w)$ and $(v,w+w_1)$ are not conjugates.\\

Let $v\in \rad(b_{s_*(q)})$ and $w^{\prime}\in W$ be such that $s(w^{\prime})=0$ for all non-zero linear maps in $\mathcal S_v$.
Since linear characters of group $G$ are lifts of irreducible characters of $\frac{G}{Z(G)}$, we have $\chi((v,w))=\chi((v,w+w^{\prime}))$ for all linear characters
$\chi$.\\

Let $\chi_{s,i}$ be a non-linear character and $v\notin \rad(b_{s_*(q)})$, then $\chi_{s,i}((v,w))=\chi_{s,i}((v,w+w^{\prime}))=0$.

Finally we consider the non-linear characters $\chi_{s,i}$ such that $v\in \rad(b_{s_*(q)})$. Then as earlier 
$$\chi_{s,i}(v,w)=\chi_s(0,s(w)-\lambda(v)-h_{i}(v))$$
$$\chi_{s,i}(v,w+w^{\prime})=\chi_s(0,s(w)+s(w^{\prime})-\lambda(v)-h_{i}(v))=\chi_s(0,s(w)-\lambda(v)-h_{i}(v))$$
Thus again in this case $\chi_{s,i}((v,w))=\chi_{s,i}((v,w+w^{\prime}))$. By Prop. \ref{conjugacy-via-characters}, $(v,w)$ is conjugate to $(v,w+w^{\prime})$.
\end{proof}

We summarize the types of conjugacy classes of special $2$-group $G$ in the following table. The notations in the table are same as that of Theorem \ref{conjugacy_classes_rad}.

\begin{center}
{\bf Table B}\\
\vspace*{0.5cm}
 \begin{tabular}{|c|c|}
  \hline
  {\bf Type of element} & {\bf Conjugacy class} \\
  \hline
  \begin{tabular}{c}
  $v \notin \rad(b_{s_*(q)})$ for all\\
  $0\neq s\in \Hom(Z(G),\F_2)$ \end{tabular} & $\{(v,w) : w\in Z(G)\}$\\
  \hline
  \begin{tabular}{c}
  $v \in \rad(b_{s_*(q)})$; $s\in\mathcal S_v $ \end{tabular}& $\{(v,w+w^{\prime}) : s(w^{\prime})=0 ~\forall~ s\in\mathcal S_v \}$\\
  \hline
\end{tabular}
\end{center}

\section{Example}\label{example}
In this section we demonstrate through an example that the results proved in earlier sections can
be used to construct the character table of a real special $2$-groups. The example that we consider is that of the
group $G$ defined by
$$G=\langle a,b,c,d,f : a^2=b^2=(ab)^2=d,c^2=(ac)^2=f,d^2=f^2=(bc)^2=(df)^2=1\rangle.$$

We make following observations about $G$.

\begin{itemize}
\item The center of $G$ is $Z(G) := \langle d, f : d^2=f^2=(df)^2=1\rangle$,
and the quotient by the center is $\frac{G}{Z(G)} : = \langle 
\bar{a},\bar{b},\bar{c} : \bar{a}^2=\bar{b}^2=\bar{c}^2=\bar{(ab)}^2=\bar{(ac)}^2=\bar{(bc)}^2=\bar{1}\rangle$.
Both $Z(G)$ and $\frac{G}{Z(G)}$ are elementary abelian $2$-groups.
\item The group $G$ is a special $2$-group as $|G|=32$ and $Z(G)=\Phi(G)=G^{\prime}=\langle d, f : d^2=f^2=(df)^2=1\rangle$.
\end{itemize}

We identify $\frac{G}{Z(G)}$ with a $3$-dimensional vector space $V$ and
 $Z(G)$ with a $2$-dimensional vector space
$W$ over the field $\F_2$. Therefore, as a set, the group $G$ gets identified with 
$V\times W$. Let $\{e_1=(1,0,0), e_2=(0,1,0), e_3=(0,0,1)\}$ be a basis of $V$ and
$\{f_1=(1,0),f_2=(0,1)\}$ be a basis of $W$ over $\F_2$. 
The quadratic map $q:V\to W$ associated to the special $2$-group $G$ is given by
$$q(x,y,z)=(x^2+xy+y^2, z^2+xz); \quad(x,y,z)=x(1,0,0)+y(0,1,0)+z(0,0,1)\in V.$$

We claim that the group $G$ is real. We use Theorem \ref{real-group-criteria} to justify this claim.
Let $v\in V$. We find $0 \neq v^{\prime} \in V$ such that $q(v^{\prime})=q(v+v^{\prime})$ to show that $G$ is indeed
real. The following table explicitly exhibits such $v^{\prime} \in V$ for a given $v \in V$.

\begin{center}
 \begin{tabular}{|c|c|c|}
  \hline
  $v$ & $v^{\prime}$ & $q(v^{\prime})=q(v+v^{\prime})$\\
  \hline
  \begin{tabular}{c}
  $(0,0,0,),(0,1,0)$\\
  $(0,0,1),(0,1,1)$\\
 \end{tabular} & $(1,0,0)$ & $(1,0)$\\
 \hline
\begin{tabular}{c}
  $(1,0,0,),(1,1,0)$\\
  $(1,0,1),(1,1,1)$\\
 \end{tabular} & $(0,1,0)$ & $(1,0)$\\
\hline
\end{tabular}
\end{center} 

It follows therefore that $G$ is real. \\

In the following table we compute the radical $\rad(b_{s_{i*}(q)})$
for each non-zero linear map $s_i:W\to \F_2$.

\begin{center}
 \resizebox{\linewidth}{!}{%
 \begin{tabular}{|c|c|c|c|c|}
 \hline
  Linear map ($s$) & $s \circ q$ & $b_{s_{*}(q)}$ & $ \rad(b_{s_*(q)})$ & $ |\rad(b_{s_*(q)})|$\\
  \hline
  $s_1(w_1,w_2)=w_1$ & 
                         $q(x,y,z)=
                         x^2+xy+y^2$  &  \begin{tabular}{c}
                         $b_{s_{1*}(q)}((x,y,z),(x^{\prime},y^{\prime},z^{\prime}))$\\
                         $=xy^{\prime}+x^{\prime}y$ \\ \end{tabular}  & $\langle e_3 \rangle$ & $2$ \\
  \hline
  $s_2(w_1,w_2)=w_2$ & 
                                       $q(x,y,z)=
                         z^2+xz$ &  \begin{tabular}{c}
                         $b_{s_{2*}(q)}((x,y,z),(x^{\prime},y^{\prime},z^{\prime}))$\\
                         $=xz^{\prime}+x^{\prime}z$\\  \end{tabular}  & $\langle e_2 \rangle$ & $2$ \\      
                         \hline
 $s_3(w_1,w_2)=w_1+w_2$ & \begin{tabular}{c}
                                       $q(x,y,z)=$\\
                         $x^2+x(y+z)+(y+z)^2$\\ \end{tabular} &  \begin{tabular}{c}
                         $b_{s_{3*}(q)}((x,y,z),(x^{\prime},y^{\prime},z^{\prime}))$\\
                         $=x(y^{\prime}+z^{\prime})+x^{\prime}(y+z)$ \\ \end{tabular}  & $\langle e_2+e_3 \rangle$ & $2$ \\   
                         \hline
 \end{tabular}}

\end{center}
\vspace*{0.2cm}
We compute the conjugacy classes of $G$ using the results of 
\S \ref{Subsection_Conjugacy-classes-of-real-special-$2$-groups}.

\begin{center}
 \resizebox{\linewidth}{!}{%
 \begin{tabular}{|c|c|}
  \hline
  $\mathcal{C}_1=\{(0,0)\}$ & $\mathcal{C}_2=\{(0,f_1)\}$\\
  \hline
  $\mathcal{C}_3=\{(0,f_2)\}$ & $\mathcal{C}_4=\{(0,f_1+f_2)\}$\\
  \hline
  $\mathcal{C}_5=\{(e_1,0),(e_1,f_1),(e_1,f_2),(e_1,f_1+f_2)\}$ & $\mathcal{C}_6=\{(e_2,0),(e_2,f_1)\}$\\
  \hline
  $\mathcal{C}_7=\{(e_2,f_2),(e_2,f_1+f_2)\}$ & $\mathcal{C}_8=\{(e_3,0),(e_3,f_2)\}$\\
  \hline
  $\mathcal{C}_9=\{(e_3,f_1),(e_3,f_1+f_2)\}$ & \begin{tabular}{c}
                                   $\mathcal{C}_{10}=\{(e_1,0)(e_2,0),(e_1,0)(e_2,0)(0,f_1),$\\
                                   $(e_1,0)(e_2,0)(0,f_2),(e_1,0)(e_2,0)(0,f_1+f_2)\}$\\
                                  \end{tabular}\\
  \hline
  \begin{tabular}{c}
  $\mathcal{C}_{11}=\{(e_1,0)(e_3,0),(e_1,0)(e_3,0)(0,f_1),$\\ $(e_1,0)(e_3,0)(0,f_2),(e_1,0)(e_3,0)(0,f_1+f_2)\}$\\
  \end{tabular}  & $\mathcal{C}_{12}=\{(e_2,0)(e_3,0),(e_2,0)(e_3,0)(0,f_1+f_2)\}$\\
  \hline
  $\mathcal{C}_{13}=\{(e_2,0)(e_3,0)(0,f_1),(e_2,0)(e_3,0)(0,f_2)\}$ & \begin{tabular}{c}
                                          $\mathcal{C}_{14}=\{(e_1,0)(e_2,0)(e_3,0),(e_1,0)(e_2,0)(e_3,0)(0,f_1),$\\
                                          
                                          $(e_1,0)(e_2,0)(e_3,0)(0,f_2),(e_1,0)(e_2,0)(e_3,0)(0,f_1+f_2)\}$\\ \end{tabular} \\
                                          \hline
                                          
 \end{tabular}}

\end{center}

Now, for each non-zero linear map $s:W\to \F_2$ we compute the regular 
quadratic forms $q_{s}$ up to isometry and determine the extraspecial $2$-groups $G_{s}$
associated to these quadratic forms using Remark \ref{classification-of-extraspecial-2-groups}. 

\begin{center}
\resizebox{\linewidth}{!}{%
\begin{tabular}{|c|c|c|c|c|c|c|}
\hline
Linear map ($s$) & $s\circ q$ & $q_{s}$ & $G_{s}$ & $|G_{s}|$ & Characters & Degree \\
\hline
  $s(w_1,w_2)=w_1$  &  $q(x,y,z)=x^2+xy+y^2$ & $[1,1]$ & $Q_2\circ D_4$ & $8$ & $\chi_{s_1,1},\chi_{s_1,2}$ & $2$\\
  \hline
  $s(w_1,w_2)=w_2$  &  $q(x,y,z)= z^2+xz$ & $[0,0]$ & $D_4\circ D_4$ & $8$ & $\chi_{s_2,1},\chi_{s_2,2}$ & $2$\\
  \hline
   $s(w_1,w_2)=w_1 + w_2$  & 
  \begin{tabular}{c}
                                       $q(x,y,z)=$\\
                         $x^2+x(y+z)+(y+z)^2$\\ \end{tabular} & $[1,1]$ & $Q_2\circ D_4$ & $8$ & $\chi_{s_3,1},\chi_{s_3,2}$ & $2$\\
  \hline
\end{tabular}}
\end{center}
\vspace*{0.5cm}
For each non-zero linear map $s_i : W \to \F_2$ we compute 
$|\rad(b_{s_{i*}(q)})|$ number of non-linear irreducible characters $\chi_{s, j}$ using Prop.\ref{character-table}.
The linear characters of group $G$ are determined using the Remark \ref{linear-representations-of-special-2-group}. \\

This summarizes to the following character table of $G$.\\

\begin{center}
\begin{tabular}{|c|c|c|c|c|c|c|c|c|c|c|c|c|c|c|c|}
\hline
  & $\mathcal{C}_1$ & $\mathcal{C}_2$ &  $\mathcal{C}_3$ & $\mathcal{C}_4$ &  $\mathcal{C}_5$ & $\mathcal{C}_6$ &  $\mathcal{C}_7$ & $\mathcal{C}_8$ &  $\mathcal{C}_9$ & $\mathcal{C}_{10}$ &  $\mathcal{C}_{11}$ & $\mathcal{C}_{12}$ &  $\mathcal{C}_{13}$ & $\mathcal{C}_{14}$ \\
 \hline
 $\chi_1$ & $1$ & $1$ & $1$ & $1$ &$1$ & $1$ &$1$ & $1$ &$1$ & $1$ &$1$ & $1$ &$1$ & $1$ \\
 \hline
 $\chi_2$ & $1$ & $1$ & $1$ & $1$ &$1$ & $1$ &$1$ & $-1$ &$-1$ & $1$ &$-1$ & $-1$ &$-1$ & $-1$ \\
 \hline
 $\chi_3$ & $1$ & $1$ & $1$ & $1$ &$1$ & $-1$ &$-1$ & $1$ &$1$ & $-1$ &$1$ & $-1$ &$-1$ & $-1$ \\
 \hline
 $\chi_4$ & $1$ & $1$ & $1$ & $1$ &$1$ & $-1$ &$-1$ & $-1$ &$-1$ & $-1$ &$-1$ & $1$ &$1$ & $1$ \\
 \hline
 $\chi_5$ & $1$ & $1$ & $1$ & $1$ &$-1$ & $1$ &$1$ & $1$ &$1$ & $-1$ &$-1$ & $1$ &$1$ & $-1$ \\
 \hline
 $\chi_6$ & $1$ & $1$ & $1$ & $1$ &$-1$ & $1$ &$1$ & $-1$ &$-1$ & $-1$ &$1$ & $-1$ &$-1$ & $1$ \\
 \hline
 $\chi_7$ & $1$ & $1$ & $1$ & $1$ &$-1$ & $-1$ &$-1$ & $1$ &$1$ & $1$ &$-1$ & $-1$ &$-1$ & $1$ \\
 \hline
 $\chi_8$ & $1$ & $1$ & $1$ & $1$ &$-1$ & $-1$ &$-1$ & $-1$ &$-1$ & $1$ &$1$ & $1$ &$1$ & $-1$ \\
 \hline
 $\chi_{s_1,1}$ & $2$ & $-2$ & $2$ & $-2$ &$0$ & $0$ & $0$ & $2$ &$-2$ & $0$ &$0$ & $0$ &$0$ & $0$ \\
 \hline
 $\chi_{s_1,2}$ & $2$ & $-2$ & $2$ & $-2$ &$0$ & $0$ & $0$ & $-2$ &$2$ & $0$ &$0$ & $0$ &$0$ & $0$ \\
 \hline
 $\chi_{s_2,1}$ & $2$ & $2$ & $-2$ & $-2$ &$0$ & $2$ & $-2$ & $0$ &$0$ & $0$ &$0$ & $0$ &$0$ & $0$ \\
 \hline
 $\chi_{s_2,2}$ & $2$ & $2$ & $-2$ & $-2$ &$0$ & $-2$ & $2$ & $0$ &$0$ & $0$ &$0$ & $0$ &$0$ & $0$ \\
 \hline
 $\chi_{s_3,1}$ & $2$ & $-2$ & $-2$ & $2$ &$0$ & $0$ & $0$ & $0$ &$0$ & $0$ &$0$ & $2$ &$-2$ & $0$ \\
 \hline
 $\chi_{s_3,2}$ & $2$ & $-2$ & $-2$ & $2$ &$0$ & $0$ & $0$ & $0$ &$0$ & $0$ &$0$ & $-2$ &$2$ & $0$ \\
 \hline
\end{tabular}
\end{center}

\bibliographystyle{plain}

\label{'ubl'}  
\end{document}